\documentclass{amsart}
\usepackage{graphicx} 
\usepackage{amsmath,amssymb,amsthm, mathrsfs}
\usepackage{boxedminipage}
\usepackage[margin=1in]{geometry}
\usepackage{mathpazo}
\usepackage{fancyhdr}
\usepackage[utf8]{inputenc}
\usepackage{color}
\usepackage{tikz}
\usetikzlibrary{positioning}
\usepackage{}

\newtheorem{theorem}{Theorem}[section]
\newtheorem{proposition}[theorem]{Proposition}
\newtheorem{lemma}[theorem]{Lemma}
\newtheorem{definition}[theorem]{Definition}
\newtheorem{conjecture}[theorem]{Conjecture}

\newcommand{\gon}{\mathrm{gon}}
\newcommand{\rk}{\mathrm{rk}}

\title{On the Gonality of Ferrers Rook Graphs}
\author{David Jensen}
\author{Marissa Morvai}
\author{Noah Speeter}
\author{William Welch}
\author{Sydney Yeomans}

\begin{document} 
\bibliographystyle{alpha}

\begin{abstract}
A Ferrers rook graph is a graph whose vertices correspond to the dots in a Ferrers diagram, and where two vertices are adjacent if they are in the same row or the same column.  We propose a conjectural formula for the gonality of Ferrers rook graphs, and prove this conjecture for a few infinite families of Ferrers diagrams.  We also prove the conjecture for all Ferrers diagrams $F$ with $\vert F \vert \leq 8$.
\end{abstract}

\maketitle

\section{Introduction}

In this paper, we initiate the study of divisors on Ferrers rook graphs.  A Ferrers rook graph is a graph whose vertices correspond to the elements of a Ferrers diagram, and where two vertices are adjacent if they are in the same row or the same column.  They are a generalization of classical rook graphs, in which the Ferrers diagram is a rectangle.

Our principal question is to compute the \emph{gonality} of these graphs.  The gonality of a graph is a relatively recently defined graph invariant, defined in terms of chip firing games on graphs and with motivation coming from algebraic geometry \cite{Baker08, BakerNorine09}.  In \cite{Speeter}, Speeter computes the gonality of classical rook graphs, which have applications to the study of complete intersection curves.  In \cite{MorrisonSpeeter}, Morrison and Speeter find the gonality of queens graphs, and in \cite{CDDKLMS}, the authors explore the gonality of graphs related to other chess pieces.

In Section~\ref{Sec:Bounds}, we propose a conjectural formula for the gonality of Ferrers rook graphs.  Intuitively, we expect that the divisors of minimal degree and positive rank are given by summing all the vertices in the complement of a non-intersecting row and column, though the edge cases of the \emph{first} row and column require special consideration.  See Conjecture~\ref{Conj:Gonality} for a more precise statement.  While we have not been able to prove this conjecture in its full generality, we have computed several cases.  In Proposition~\ref{Prop:RectanglePlus}, we demonstrate Conjecture~\ref{Conj:Gonality} for Ferrers diagrams that look like a rectangle with one extra partial row, and in Lemma~\ref{Lem:L}, we prove the conjecture for Ferrers diagrams in which every vertex is either in the first row or first column.  As a consequence, we obtain Conjecture~\ref{Conj:Gonality} for all Ferrers diagrams with at most two rows or at most two columns.  In Theorems~\ref{Thm:T3}, \ref{Thm:T4} and~\ref{Thm:T5}, we prove Conjecture~\ref{Conj:Gonality} for isosceles right triangles of sidelength 3, 4, and 5, respectively.
Finally, in Theorem~\ref{Thm:LessThan8}, we prove that Conjecture~\ref{Conj:Gonality} holds for all Ferrers diagrams $F$ with $\vert F \vert \leq 8$.

\section*{Acknowledgements} This research was conducted as a project with the University of Kentucky Math Lab, supported by NSF DMS-2054135.

\section{Preliminaries}

Before we begin, we must establish basic definitions. 

\subsection{Ferrers Rook Graphs}
A \emph{Ferrers diagram} is a finite subset $F \subset \mathbb{N}^2$ with the property that, if $(x,y) \in F$, then either $x=1$ or $(x-1,y) \in F$, and either $y=1$ or $(x,y-1) \in F$.  We will draw Ferrers diagrams in the English style, so that the box $(1,1)$ is in the \emph{top} left corner, and the coordinate $y$ increases from top to bottom.  We write $\vert F \vert$ for the number of elements in $F$.

Given a Ferrers diagram $F$, we define the \emph{Ferrers rook graph} $R(F)$ to be the simple graph whose vertices are elements of $F$, and where two vertices are adjacent if they are in the same row or the same column.  In other words, $(x,y)$ is adjacent to $(x',y')$ if either $x=x'$ or $y=y'$.    We refer to an edge connecting $(x,y)$ to $(x,y')$ as a \emph{vertical edge} and an edge connecting $(x,y)$ to $(x',y')$ as a \emph{horizontal edge}.  For example, Figure~\ref{Fig:Ferrers} depicts a Ferrers diagram $F$ and the corresponding Ferrers rook graph $R(F)$.  Ferrers rook graphs typically have lots of edges, so we will typically draw only the Ferrers diagram $F$, as in the left side of Figure~\ref{Fig:Ferrers}, rather than the graph $R(F)$, as in the right side of Figure~\ref{Fig:Ferrers}.

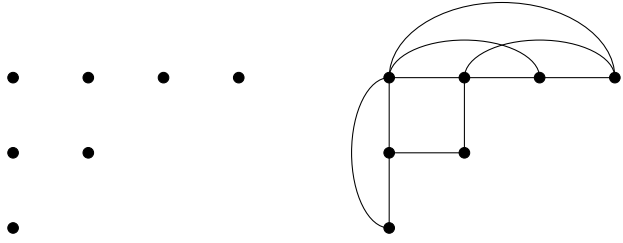
\begin{figure}[ht]
    \vspace{0.25cm}
    \centering
    \begin{tikzpicture}
    \filldraw (0,3) circle (2pt);
    \filldraw (1,3) circle (2pt);
    \filldraw (2,3) circle (2pt);
    \filldraw (3,3) circle (2pt);
    \filldraw (0,2) circle (2pt);
    \filldraw (1,2) circle (2pt);
    \filldraw (0,1) circle (2pt);

    \filldraw (5,3) circle (2pt);
    \filldraw (6,3) circle (2pt);
    \filldraw (7,3) circle (2pt);
    \filldraw (8,3) circle (2pt);
    \filldraw (5,2) circle (2pt);
    \filldraw (6,2) circle (2pt);
    \filldraw (5,1) circle (2pt);
    \draw (5,3)--(8,3);
    \draw (5,3)--(5,1);
    \draw (5,2)--(6,2);
    \draw (6,3)--(6,2);
    \draw (8,3) arc [ start angle=0, end angle=180, x radius=1.5, y radius =1] ;
    \draw (8,3) arc [ start angle=0, end angle=180, x radius=1, y radius =0.5] ;
    \draw (7,3) arc [ start angle=0, end angle=180, x radius=1, y radius =0.5] ;
    \draw (5,3) arc [ start angle=90, end angle=270, x radius=0.5, y radius =1] ;
    
    \end{tikzpicture}
    \caption{A Ferrers diagram $F$ and the corresponding Ferrers rook graph $R(F)$.}
    \label{Fig:Ferrers}
\end{figure}

\subsection{Chip Firing}

In this subsection, we introduce the basic theory of divisors on graphs.  A \emph{divisor} on a graph is an association of an integer to each vertex.  Divisors on a graph $G$ are written as formal sums:
\[
D = \sum_{v \in V(G)} D(v) \cdot v,
\]
where $D(v)$ is the integer associated to $v$.  Divisors can be thought of as stacks of poker chips on the vertices, with negative numbers thought of as a debt of chips.  The \emph{support} of a divisor $D$ is 
\[
\mathrm{Supp}(D) = \{ v \in V(G) \mid D(v) > 0 \}.
\]

In the chip firing game, there is only one kind of move.  One can \emph{fire} a vertex, resulting in that vertex giving one chip to each of its neighbors.  This operation is commutative -- if we fire two vertices, the order in which we fire them does not matter.  For this reason, it is common to talk about firing a set of vertices, meaning that we fire each vertex in the set once, in any order. 

Two divisors are \emph{equivalent} if you can get from one to the other by a sequence of chip firing moves.  A divisor is \emph{effective} if it has no vertices with a negative number of chips.  We define $\vert D \vert$ to be the set of effective divisors equivalent to $D$.  Given a vertex $v$, a divisor is \emph{effective away from} $v$ if no vertex other than possibly $v$ has a negative number of chips.

The \emph{degree} of a divisor is the total number of chips.  A divisor $D$ has \emph{positive rank} if, for every vertex $v$, there exists $D' \in \vert D \vert$ such that $v \in \mathrm{Supp}(D')$.  We now come to the main definition of this section.

\begin{definition} 
The \emph{gonality} of a graph is the minimum degree of a divisor with positive rank.
\end{definition}

\subsection{Dhar's Burning Algorithm}

Given a vertex $v$ and a divisor $D$ that is effective away from $v$, there is an algorithm for deciding whether it is equivalent to an effective divisor.  First, we must have the definition of a $v$-reduced divisor.

\begin{definition} 
A divisor $D$ is $v$-\emph{reduced} if it is effective away from $v$ and firing any vertex subset $A$ not containing $v$ results in a divisor that is not effective away from $v$.
\end{definition}

Given a vertex $v$, every divisor is equivalent to a unique $v$-reduced divisor \cite[Corollary~4.13]{BS13}.  A $v$-reduced divisor is equivalent to an effective divisor if and only if it is itself effective.  Thus, it is useful to have an algorithm for computing $v$-reduced divisors.  This algorithm is known as Dhar's burning algorithm (see \cite[Section~5.1]{BS13}).  To run the algorithm, first start a ``fire'' at the vertex $v$.  (This ``fire'' should not be confused with firing a vertex or firing a set of vertices in chip firing.)  Then, every edge adjacent to a vertex that is on fire will burn as well.  If any vertex has fewer chips than it does adjacent edges on fire, that vertex will also burn.  Continue this process until no more vertices can catch fire.  If every vertex of the graph burns, then the divisor is $v$-reduced.  If not, then the set of unburnt vertices can be fired, and the result is a divisor that is effective away from $v$.

The following lemma will be useful for running Dhar's Burning Algorithm on Ferrers rook graphs, because each row and column of such a graph is a complete graph.

\begin{lemma}
\label{Lem:Complete}
Let $D$ be an effective divisor on the complete graph $K_n$.  The following are equivalent:
\begin{enumerate}
\item  \label{Item:Rank} $\rk(D) = 0$,
\item  \label{Item:Chips} for all $i$ in the range $1 \leq i \leq n-1$, $\vert \{ v \mid D(v) \geq i \} \vert < n-i$, and
\item \label{Item:Dhars}  for any vertex $v$ with $D(v) = 0$, running Dhar's Burning Algorithm starting at $v$ burns the entire graph.
\end{enumerate}
\end{lemma}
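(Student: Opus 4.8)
The plan is to prove the three equivalences by a cycle of implications, say $(\ref{Item:Rank}) \Rightarrow (\ref{Item:Chips}) \Rightarrow (\ref{Item:Dhars}) \Rightarrow (\ref{Item:Rank})$, exploiting the fact that on $K_n$ chip firing is extremely rigid: firing a set $A$ of size $k$ subtracts $n-k$ chips from each vertex in $A$ and adds $k$ chips to each vertex outside $A$. In particular, every divisor equivalent to $D$ is obtained by choosing a linear order (really, a chain of subsets) in which to fire, and the effect on the multiset of chip values is governed entirely by the cardinalities of the fired sets. I would begin by recording this normal form: any effective divisor equivalent to $D$ arises, up to adding a constant to all vertices (which is firing the whole vertex set), from firing a nested sequence of subsets, and the key quantitative consequence is that the sorted vector of chip counts changes in a controlled, ``majorization-like'' way.

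For $(\ref{Item:Rank}) \Rightarrow (\ref{Item:Chips})$, I would argue the contrapositive: suppose for some $i$ we have $\vert \{ v \mid D(v) \geq i \}\vert \geq n-i$, i.e.\ there are at least $n-i$ vertices carrying at least $i$ chips. Let $A$ be the set of the remaining (at most $i$) vertices. Firing the complement of $A$ once takes $i' := \vert V \setminus A\vert \le i$ chips away from those $n-i'\geq n-i$ high vertices (leaving them with $\geq i - i' \geq 0$) and adds $n - i' \geq 1$ chips to each vertex of $A$; iterating appropriately moves a chip onto any desired vertex while staying effective, so $\rk(D) \geq 0$, and since $D$ is effective and we can also reach a divisor supported on any chosen vertex, in fact $\rk(D)\ge 0$ contradicts nothing yet — so instead I phrase it as: the chip-count condition in $(\ref{Item:Chips})$ is exactly the obstruction to being able to move a chip onto an empty vertex, and failing it lets us do so. Conversely, $(\ref{Item:Chips}) \Rightarrow (\ref{Item:Dhars})$: start the fire at a vertex $v$ with $D(v)=0$. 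A vertex $w$ burns once at least $D(w)+1$ of its edges are on fire, i.e.\ once at least $D(w)+1$ other vertices have burned. So if $k$ vertices have burned, any unburnt $w$ with $D(w) \le k-1$, equivalently $D(w) < k$, catches fire. Sorting the unburnt vertices by chip count and inducting on $k$, the hypothesis $\vert\{v : D(v)\ge i\}\vert < n-i$ for all $i$ guarantees at each stage there is always another vertex with chip count below the current burn count, so the fire never stalls and the whole graph burns — this is a short pigeonhole/induction argument.

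Finally $(\ref{Item:Dhars}) \Rightarrow (\ref{Item:Rank})$: if Dhar's algorithm started at some $v$ with $D(v)=0$ burns everything, then $D$ is $v$-reduced (by the description of the algorithm recalled above), and a $v$-reduced divisor is equivalent to an effective divisor iff it is already effective; since $D(v)=0$, the divisor $D - v$ is $v$-reduced and not effective, hence not equivalent to any effective divisor, so no divisor in $\vert D\vert$ has positive coefficient at $v$, giving $\rk(D) < 1$, i.e.\ $\rk(D)=0$ (using that $D$ is effective so $\rk(D)\ge 0$). One technical point to nail down is that statement $(\ref{Item:Dhars})$ quantifies over \emph{some} such $v$ while the argument for $(\ref{Item:Chips})\Rightarrow(\ref{Item:Dhars})$ produces the conclusion for \emph{every} empty vertex; by symmetry of $K_n$ this is harmless, but I would state the burning argument for an arbitrary empty $v$ so the quantifiers line up cleanly. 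I expect the main obstacle to be purely bookkeeping: making the ``firing on $K_n$ only depends on set sizes'' normal form precise enough that the implication into $(\ref{Item:Rank})$ — really the direction asserting $\rk(D) \ge 0$ fails — is rigorous rather than hand-waved, and keeping the off-by-one in the burning condition ($D(w)+1$ burnt neighbors needed) consistent throughout.
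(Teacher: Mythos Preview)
Your cycle $(\ref{Item:Rank}) \Rightarrow (\ref{Item:Chips}) \Rightarrow (\ref{Item:Dhars}) \Rightarrow (\ref{Item:Rank})$ and the substance of each implication match the paper's proof exactly: fire the set $X_i = \{v : D(v)\ge i\}$ for the contrapositive of $(\ref{Item:Rank})\Rightarrow(\ref{Item:Chips})$, induct on the number of burnt vertices for $(\ref{Item:Chips})\Rightarrow(\ref{Item:Dhars})$, and use that burning everything means $D$ is $v$-reduced for $(\ref{Item:Dhars})\Rightarrow(\ref{Item:Rank})$.

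That said, your write-up of $(\ref{Item:Rank})\Rightarrow(\ref{Item:Chips})$ has a genuine slip and then loses the thread. You set $i' := \vert V\setminus A\vert$, but $V\setminus A$ is the high-chip set, which has size at least $n-i$, not at most $i$; the quantity you want is $i' := \vert A\vert \le i$, since firing a set $S$ removes $n-\vert S\vert = \vert V\setminus S\vert$ chips from each of its members. More importantly, after the single firing you have only shown the low vertices now carry chips; the high vertices may have dropped to $0$, and your ``iterating appropriately'' does not fix this. The clean argument (which is what the paper does) avoids iteration entirely: for a target vertex $w$, either $w\in X_i$ and already $D(w)\ge i\ge 1$, or $w\notin X_i$ and one firing of $X_i$ puts at least $\vert X_i\vert \ge n-i \ge 1$ chips on $w$ while keeping $X_i$ nonnegative. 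Your opening ``normal form'' about nested firings and majorization is not needed anywhere and should be dropped. Finally, a minor point: statement $(\ref{Item:Dhars})$ already quantifies over \emph{all} $v$ with $D(v)=0$, not \emph{some}, so your closing worry about quantifiers is moot.
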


\begin{proof}
It is clear that \eqref{Item:Dhars} implies \eqref{Item:Rank}.  To see that \eqref{Item:Rank} implies \eqref{Item:Chips}, let $X_i = \{ v \mid D(v) \geq i \}$ and suppose that $\vert X_i \vert \geq n-i$ for some $i$.  For every vertex $v \in X_i$, we have $D(v) \geq i \geq 1$.  Now, let $D'$ be the divisor obtained from $D$ by firing $X_i$.  For every vertex $v \notin X_i$, we have $D'(v) \geq n-i \geq 1$.  Thus $D$ has positive rank.

Finally, to see that \eqref{Item:Chips} implies \eqref{Item:Dhars}, we prove by induction that every vertex in $X_i \smallsetminus X_{i+1}$ burns for all $i$.  The set $X_0 \smallsetminus X_1$, which consists of all vertices that have no chips, clearly burns.  Now, suppose that $ \cup_{j=0}^{i-1} (X_j \smallsetminus X_{j+1})$ burns.  By assumption, this set has size greater than $i$.  The set $X_i \smallsetminus X_{i+1}$ consists of all vertices with exactly $i$ chips.  Since each such vertex is adjacent to each other vertex, and since there are greater than $i$ burnt vertices, each vertex with exactly $i$ chips burns, and the result follows.
\end{proof}

\subsection{Invariants of Divisors on Ferrers Rook Graphs}

To simplify our discussion, we define a few terms related to divisors on Ferrers rook graphs. 

\begin{definition}
Let $F$ be a Ferrers diagram and let $D$ be an effective divisor on $R(F)$.  The \emph{deficit} of a column (resp. row) is the number of vertices in that column (resp. row) minus the number of chips of $D$ in that column (resp. row).

The \emph{poorest} column is the column with the greatest deficit.  Note that multiple columns may be tied for the greatest deficit, in which case they are all poorest columns.

We say that a row or column is \emph{impoverished} if its deficit is greater than or equal to 2.
\end{definition}

The importance of impoverished rows and columns is highlighted by the following important observation. By Lemma~\ref{Lem:Complete}, if we run Dhar's burning algorithm and any vertex in an impoverished row or column burns, then the entire row or column burns.

\section{Bounds on the Gonality of Ferrers Rook Graphs}
\label{Sec:Bounds}

In this section, we compute both upper and lower bounds for the gonality of Ferrers rook graphs.

\subsection{Upper Bounds}
Since the gonality is the minimum degree of a divisor with positive rank, to compute an upper bound, it suffices to find a divisor of positive rank.

Let $F$ be a Ferrers diagram, and let $(x,y) \in \mathbb{N}^2$.  We write $D_{x,y}$ for the divisor on $R(F)$ given by
\[
D_{x,y} = \sum_{\substack{x' \neq x \\ y' \neq y}} (x',y') .
\]
In other words, $D_{x,y}$ is the sum of all vertices in $R(F)$ that are not in column $x$ or row $y$.  We prove a few results about these divisors.

\begin{proposition}
\label{Prop:xyMoreThan1}
Let $F$ be a Ferrers diagram.  If $x, y >1$ and $(x,y) \notin F$, then $D_{x,y}$ has positive rank.
\end{proposition}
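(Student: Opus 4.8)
The plan is to verify the positive-rank condition directly: for an arbitrary vertex $v \in F$, I must exhibit an effective divisor equivalent to $D_{x,y}$ whose support contains $v$. Since every divisor is equivalent to a unique $v$-reduced divisor, and a $v$-reduced divisor is effective iff it is equivalent to an effective one, it suffices to show that the $v$-reduced representative of $D_{x,y}$ has a chip at $v$ (equivalently, is effective, since $D_{x,y}$ is already effective away from nothing — it is effective). I would compute this $v$-reduced divisor via Dhar's burning algorithm. Note first that if $v$ already lies in $\mathrm{Supp}(D_{x,y})$, i.e. $v$ is not in column $x$ or row $y$, there is nothing to do; so assume $v = (x, y')$ lies in column $x$ or $v = (x', y)$ lies in row $y$.

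The key computation is to run Dhar's burning algorithm starting from $v$ and track which vertices survive. Take the case $v = (x,y')$ in column $x$ (the row case is symmetric, using that $(x,y)\notin F$ together with the Ferrers property to control the shape). The fire starts at $v$. Every other vertex of column $x$ has $D_{x,y}$-value $0$, so the whole of column $x$ burns immediately (it is a clique, and one burnt vertex suffices against a vertex with $0$ chips). In particular the vertex $(x,1)$ in the first row burns. Now the first row: every vertex $(x', 1)$ with $x' \neq x$, $x' \neq$ ... wait — row $y$ vertices have $0$ chips. Here is the crux: because $x > 1$ and $y > 1$ and $(x,y) \notin F$, the poorest-looking rows and columns are precisely row $y$ and column $x$, and once column $x$ is on fire, I expect that row $y$ does \emph{not} necessarily catch (its vertices each see only one burnt neighbor, namely the one in column $x$, but they hold $1$ chip). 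The fire should then propagate to burn every column other than $x$ (each such column, being a clique, has some vertex in a burnt row or meets column $x$'s fire), and every row other than $y$, leaving exactly the vertices of row $y$ — minus $(x,y)$, which isn't in $F$ — unburnt, or leaving nothing unburnt. I would argue by the "impoverished row/column" observation and Lemma~\ref{Lem:Complete}: after column $x$ burns, one checks row by row whether each row burns, and concludes that if the whole graph burns then $D_{x,y}$ is $v$-reduced hence effective, giving $v \in \mathrm{Supp}(D_{x,y})$ already (contradiction with our assumption, or fine); and if the unburnt set is nonempty, it must be a union of some vertices all lying in row $y$, so firing it moves chips onto row $y$, in particular onto $v$ if $v$ is in row $y$.

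The main obstacle I anticipate is the bookkeeping in the case $v = (x, y')$ with $v$ in column $x$: after column $x$ burns we need $v$ itself to end up with a chip, but the naive burning seems to leave $v$ (in column $x$) unburnt only together with the rest of column $x$ — which did burn. The resolution must be to run Dhar from $v$ and observe that the \emph{unburnt} set, whatever it is, when fired, delivers a chip to $v$: concretely, every unburnt vertex lies in row $y$ (since all other rows and columns burn), and firing the unburnt set of row-$y$ vertices sends one chip along column $x$ to $v$ for each such vertex, while $v$ loses nothing (it has no unburnt neighbors in its own row other than those, and it is not itself fired). So the careful part is proving the unburnt set is contained in row $y$ and is nonempty precisely when needed — this is where $x,y>1$ and $(x,y)\notin F$ are used, via the Ferrers shape constraint that column $x$ has at least $2$ vertices and row $y$ has at least $2$ vertices but they do not meet. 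I would organize this as: (1) column $x$ burns; (2) every row $\neq y$ burns completely; (3) hence the unburnt set $U \subseteq$ row $y \setminus \{(x,y)\}$; (4) if $U = \emptyset$ then $D_{x,y}$ is $v$-reduced and effective, done; if $U \neq \emptyset$, fire $U$ and check the resulting divisor is effective with a chip at $v$.
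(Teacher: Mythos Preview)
Your burning analysis is wrong, and the error cascades. Take $v=(x,y')$ in column $x$. After column $x$ burns, you claim ``every row $\neq y$ burns completely.'' But consider any vertex $(a,b)$ with $a\neq x$ and $b\neq y$: it carries exactly one chip, and its only possibly-burning neighbor is $(x,b)$ (if that vertex exists). One burning neighbor against one chip means it does \emph{not} burn. Meanwhile, a vertex $(a,y)$ in row $y$ carries zero chips (not one, as you wrote), but since $(x,y)\notin F$ it has no burning neighbor at all, so it does not burn either. The fire therefore stops after consuming column $x$; the unburnt set is the entire complement of column $x$, not a subset of row $y$.

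This also breaks your step (4) independently: even granting your claimed unburnt set $U\subseteq\text{row }y$, firing $U$ could never deliver a chip to $v=(x,y')$, because no vertex of row $y$ is adjacent to any vertex of column $x$ (again since $(x,y)\notin F$). So the mechanism ``firing the unburnt set of row-$y$ vertices sends one chip along column $x$ to $v$'' is impossible. And your $U=\emptyset$ branch is backwards: if $D_{x,y}$ were $v$-reduced with $D_{x,y}(v)=0$, that would \emph{refute} positive rank, not establish it.

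The fix is that the correct Dhar computation hands you the paper's one-line argument: the unburnt set is $F\setminus(\text{column }x)$, so fire that set directly. Every vertex outside column $x$ loses at most one chip (its unique neighbor in column $x$, if any), and that neighbor $(x,b)$ exists only when $b\neq y$ (Ferrers plus $(x,y)\notin F$), so the vertex had a chip to lose; hence the result is effective. And since $x>1$ gives $(x-1,y')\in F$, the vertex $v=(x,y')$ receives at least one chip. No burning bookkeeping is needed.
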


\begin{proof}
Let $(x',y') \in F$.  We will show that there is an effective divisor equivalent to $D_{x,y}$ that contains $(x',y')$ in its support.  If $x' \neq x$ and $y' \neq y$, then $(x',y')$ is in the support of $D_{x,y}$.  Otherwise, without loss of generality assume that $x'=x$.  Now fire the complement of column $x$ to obtain an equivalent divisor $D \sim D_{x,y}$.  To see that $D$ is effective, note that because $(x,y) \notin F$, every horizontal edge with one endpoint in column $x$ has a chip at the other endpoint.  Finally, since $x>1$, we see that firing the complement of column $x$ moves a chip from $(x-1,y')$ to $(x,y')$.  Thus, $D$ contains $(x',y')$ in its support.
\end{proof}

\begin{proposition}
\label{Prop:xIs1}
Let $F$ be a Ferrers diagram.  Suppose that the first row of $F$ has length $k$ and the second row of $F$ has length $\ell$.  Then $D_{k+1,1}$ has positive rank if and only if $(\ell,k-\ell+1) \in F$.
\end{proposition}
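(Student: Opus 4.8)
The plan is to make one global chip-firing move that pushes the whole divisor onto the first row, and then treat the two implications separately. Since the first row has length $k$, no cell of $F$ lies in column $k+1$, so $D_{k+1,1}$ is simply the sum of all cells of $F$ in rows $2,3,\dots$. Firing the complement of the first row (a computation like that in the proof of Proposition~\ref{Prop:xyMoreThan1}) yields an equivalent effective divisor $D'$ supported on the first row, with $D'(j,1)$ equal to one less than the number of cells of $F$ in column $j$. Through $D_{k+1,1}\sim D'$ we already realize every cell in rows $\ge 2$ (via $D_{k+1,1}$ itself) and every cell $(j,1)$ with $1\le j\le\ell$ (such a column has at least two cells, so $D'(j,1)\ge 1$). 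Hence $D_{k+1,1}$ has positive rank if and only if it is equivalent to an effective divisor with a chip at $(j,1)$ for every $j$ with $\ell<j\le k$. Finally, note that $(\ell,k-\ell+1)\in F$ says exactly that column $\ell$ has at least $k-\ell+1$ cells, and since column lengths are weakly decreasing this is equivalent to saying every column $j$ with $j\le\ell$ has at least $k-\ell+1$ cells.

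For the ``if'' direction, assume the latter and fire the vertex set $S'=\{(j,1): 1\le j\le\ell\}\cup\{(x,y)\in F: y\ge 2\}$ on $D'$, obtaining a divisor $D''$. A direct computation shows $D''$ vanishes on rows $\ge 2$; equals $\ell$ at each $(j,1)$ with $\ell<j\le k$ (such a vertex lies outside $S'$ and receives one chip from each of $(1,1),\dots,(\ell,1)$); and equals $(\text{cells in column } j)-1-(k-\ell)$ at $(j,1)$ for $j\le\ell$, which is nonnegative precisely by the hypothesis. So $D''$ is effective, and since $\ell\ge 1$ it has a chip at every $(j,1)$ with $\ell<j\le k$; combined with the cells already handled, $D_{k+1,1}$ has positive rank.

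For the ``only if'' direction, suppose $(\ell,k-\ell+1)\notin F$. If $F$ is a single row then $D_{k+1,1}$ is the zero divisor and has no positive rank, so assume $\ell\ge 1$; then column $\ell$ has at most $k-\ell$ cells, and in particular $k>\ell$, so $D'(k,1)=0$. I claim $D'$ is the $(k,1)$-reduced divisor equivalent to $D_{k+1,1}$; since a $(k,1)$-reduced divisor is equivalent to an effective divisor if and only if it is effective, this forces $D_{k+1,1}-(k,1)$ to be non-effective, so $D_{k+1,1}$ lacks positive rank. To prove the claim, run Dhar's burning algorithm from $(k,1)$ on $D'$: columns $\ell+1,\dots,k$ are trivial, so the chipless vertices $(\ell+1,1),\dots,(k-1,1)$ burn at once; then $(\ell,1)$ is adjacent to the $k-\ell$ burnt vertices $(\ell+1,1),\dots,(k,1)$ while carrying only $(\text{cells in column }\ell)-1\le k-\ell-1$ chips, so it burns; this ignites the edge $(\ell,1)$--$(\ell,2)$, and since every cell in rows $\ge 2$ is chipless and these cells span a connected subgraph of $R(F)$ (a Ferrers rook graph in its own right), all of rows $\ge 2$ burn; finally each $(j,1)$ with $j\le\ell$ is adjacent to all of its burnt column-neighbors together with the burnt $(k,1)$, a total exceeding its chip count of $(\text{cells in column } j)-1$, so it burns as well. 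Every vertex burns, proving the claim.

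I expect the Dhar computation in the ``only if'' direction to be the main obstacle. The subtle point is that columns with $j\le\ell$ may be long, so $(j,1)$ carries many chips and resists catching fire; it nonetheless burns because the fire first reaches rows $\ge 2$, which then collapse entirely (being chipless and connected), after which the newly burnt column-neighbors feed enough fire back into the first row. Checking that the single inequality ``column $\ell$ has at least $k-\ell+1$ cells'' is what governs both the effectiveness of $D''$ in one direction and the burning cascade in the other is the bookkeeping that needs the most care.
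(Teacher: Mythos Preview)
Your proof is correct and follows essentially the same approach as the paper: both push $D_{k+1,1}$ onto the first row by firing its complement, then in the ``if'' direction fire the first $\ell$ columns (your set $S'$ equals exactly this, since every cell in rows $\ge 2$ lies in columns $\le\ell$), and in the ``only if'' direction run Dhar's burning algorithm from a chipless top-row vertex to show the divisor is reduced there. Your Dhar argument is somewhat more explicit than the paper's, but the content is identical.
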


\begin{proof}
Let $(x,y) \in F$.  We will determine when there is an effective divisor equivalent to $D_{k+1,1}$ that contains $(x,y)$ in its support.  If $y \neq 1$, then $(x,y)$ is in the support of $D_{k+1,1}$.  If $y=1$, then fire the complement of the first row to obtain an equivalent divisor $D \sim D_{k+1,1}$. To see that $D$ is effective, note that every vertical edge with one endpoint in the first row has a chip at its other endpoint.  If $x \leq \ell$, then firing the complement of the first row moves a chip from $(x,2)$ to $(x,1)$, so $D$ contains $(x,y)$ in its support.

Now, suppose that $y=1$ and $x>\ell$.  We show that $D$ is equivalent to an effective divisor containing $(x,y)$ in its support if and only if $(\ell,k-\ell+1) \in F$.  In $D$, the only vertices with chips on them are in the first row, and the number of chips on a vertex is one less than the number of vertices in that column.  If $(\ell,k-\ell+1) \in F$, then each vertex in the first row and the first $\ell$ columns has at least $k-\ell$ chips.  Thus, if we fire the first $\ell$ columns, we obtain an effective divisor with a chip at $(x,y)$.

Conversely, if $(\ell,k-\ell+1) \notin F$, then $D$ has at most $k-\ell-1$ chips at $(\ell,1)$.  Running Dhar's burning algorithm starting at $(x,y)$, we see that vertex $(\ell,1)$ burns, hence so does $(\ell,2)$, and from there, every vertex in the complement of the first row burns.  Each as-yet unbunrt vertex in the top row is adjacent to at least one burning vertex in the top row, and to every vertex in its column.  Since the number of chips on a vertex in the top row is one less than the number of vertices in its column, we then see that the entire graph burns.  It follows that $D$ is not equivalent to an effective divisor with $(x,y)$ in its support.
\end{proof}

We conjecture that divisors of the form $D_{x,y}$ have minimal degree among divisors of positive rank on Ferrers rook graphs. 

\begin{conjecture}
\label{Conj:Gonality}
Let $F$ be a Ferrers diagram.  Suppose that the first row of $F$ has length $k$ and the second row of $F$ has length $\ell$, the first column of $F$ has length $k'$ and the second column has length $\ell'$.  Then
\[
\gon (R(F)) = \min \{ \deg(D_{x,y}) \},
\]
where:
\begin{enumerate}
\item if $(\ell,k-\ell+1), (k'-\ell'+1,\ell') \in F$, then the min is over all $(x,y) \notin F$,
\item if $(\ell,k-\ell+1) \in F$, $(k'-\ell'+1,\ell') \notin F$, then the min is over all $(x,y) \notin F$ with $x>1$,
\item if $(\ell,k-\ell+1) \notin F$, $(k'-\ell'+1,\ell') \in F$, then the min is over all $(x,y) \notin F$ with $y>1$, and
\item if $(\ell,k-\ell+1), (k'-\ell'+1,\ell') \notin F$, then the min is over all $(x,y) \notin F$ with $x,y>1$.
\end{enumerate}
\end{conjecture}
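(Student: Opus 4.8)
The upper bound is already in hand: Proposition~\ref{Prop:xyMoreThan1} shows $D_{x,y}$ has positive rank whenever $x,y>1$ and $(x,y)\notin F$, Proposition~\ref{Prop:xIs1} shows $D_{k+1,1}$ has positive rank exactly when $(\ell,k-\ell+1)\in F$, and the transpose of Proposition~\ref{Prop:xIs1} shows $D_{1,k'+1}$ has positive rank exactly when $(k'-\ell'+1,\ell')\in F$. In each of the four cases, the divisors over which the minimum is taken are precisely the $D_{x,y}$ that these results certify to have positive rank, so $\gon(R(F))\le\min\{\deg(D_{x,y})\}$. The entire content of the conjecture is therefore the matching lower bound: every effective divisor of positive rank has degree at least $\min\{\deg(D_{x,y})\}$ over the relevant set.

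To set up the lower bound, let $D$ be an effective divisor of positive rank with $\deg(D)=\gon(R(F))$, and measure its degree through deficits: writing $\Delta$ for the total deficit summed over all columns (equivalently, over all rows), we have $\deg(D)=\vert F\vert-\Delta$, while $\deg(D_{x,y})=\vert F\vert-k'_x-k_y+[\,(x,y)\in F\,]$, where $k'_x$ and $k_y$ denote the lengths of column $x$ and row $y$. The goal is thus to produce an admissible pair $(x,y)$ with $\Delta\le k'_x+k_y-[\,(x,y)\in F\,]$. The plan is to put $D$ into a reduced normal form and then analyze it with Dhar's burning algorithm, exploiting the observation after Lemma~\ref{Lem:Complete}: since every row and column of $R(F)$ is complete, once one vertex of an impoverished row or column burns, the whole row or column burns.

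Concretely I would proceed as follows. First, replace $D$ by its $v$-reduced representative for a carefully chosen vertex $v$ -- the natural candidates are the bottom vertex of a poorest column and, symmetrically, the rightmost vertex of a poorest row; the choice of whether $v$ lies in the first column or first row is exactly what separates the four cases, mirroring the dichotomy already visible in Proposition~\ref{Prop:xIs1}. Second, show that a $v$-reduced divisor of positive rank has a severely restricted deficit pattern -- in particular at most one impoverished column $c_0$ and at most one impoverished row $r_0$ -- since two impoverished columns would let Dhar's fire, started at a chip-free vertex, burn through both and then cascade to the whole graph, contradicting positive rank somewhere. Third, bound $\Delta$: the deficit of $c_0$ is at most $k'_{c_0}$, that of $r_0$ at most $k_{r_0}$, every other row and column has deficit at most $1$, and a further use of positive rank via Dhar forces the number of deficit-$1$ columns outside $c_0$ to be at most $k_{r_0}-[\,(c_0,r_0)\in F\,]$, so that $\Delta\le k'_{c_0}+k_{r_0}-[\,(c_0,r_0)\in F\,]$. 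Fourth, check that $(c_0,r_0)$ lies in the set over which the minimum is taken, using the hypotheses of cases (1)--(4); for instance, a positive-rank divisor whose unique impoverished column is the first column can exist only when $(k'-\ell'+1,\ell')\in F$, which is essentially the content of Proposition~\ref{Prop:xIs1}.

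The main obstacle is the second and third steps: pinning down how deficit may be distributed among the \emph{non}-impoverished rows and columns, and interlocking the row analysis with the column analysis. A general positive-rank divisor could a priori spread small amounts of deficit over many rows and columns in a way that $D_{x,y}$ does not, and ruling this out -- while also handling the genuinely different behavior of the first row and column, which Proposition~\ref{Prop:xIs1} already exhibits -- is the crux. This is why, instead of proving the conjecture in full, we are only able to establish it for the families in Proposition~\ref{Prop:RectanglePlus}, Lemma~\ref{Lem:L}, and Theorems~\ref{Thm:T3}, \ref{Thm:T4}, \ref{Thm:T5}, and for every Ferrers diagram $F$ with $\vert F\vert\le 8$ in Theorem~\ref{Thm:LessThan8}, where the deficit analysis can be pushed through by hand.
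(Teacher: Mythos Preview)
This statement is a \emph{conjecture}; the paper does not prove it, so there is no proof to compare against. Your write-up recognizes this and, like the paper, settles only the upper bound: Propositions~\ref{Prop:xyMoreThan1} and~\ref{Prop:xIs1} (together with its transpose) certify that every $D_{x,y}$ in the indicated index set has positive rank, so $\gon(R(F))\le\min\{\deg(D_{x,y})\}$ in each of the four cases. That part is correct and matches the paper.

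Where you diverge is in the sketch for the lower bound. The paper's working tool is Theorem~\ref{Thm:RedGreenRegion}: one fixes $(x,y)\in F$, looks at the regions $L(x,y)$ and $U(x,y)$, and replaces $D$ not by a $v$-reduced representative but by one that first minimizes the deficit in the poorest $L$-column and then maximizes the number of chips in the top row; Dhar is then run from a vertex in an impoverished $U$-row. This yields the explicit bound $\gon(R(F))\ge\min\{\vert U(x,y)\vert-y,\ \vert L(x,y)\vert-x\}$, which is already sharp for $S(m,n,\ell)$, $\mathcal{L}(m,n)$, and $T_3$, but falls short for $T_4$, $T_5$, and several of the small diagrams, where the paper has to extend the argument by hand. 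Your proposed route instead $v$-reduces at the foot of a poorest column (or the end of a poorest row) and aims for the structural claim that a positive-rank $v$-reduced divisor has at most one impoverished column and one impoverished row, from which the exact bound would follow directly. That claim is stronger than anything Theorem~\ref{Thm:RedGreenRegion} delivers, and the place you identify as the obstacle---controlling the deficit spread over the non-impoverished rows and columns and interlocking the row and column analyses---is indeed the gap; in particular, your cascade argument (``two impoverished columns force the whole graph to burn'') is not yet justified, since non-impoverished rows can block the fire. Both approaches stall at essentially the same phenomenon, which is why the conjecture remains open.
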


\subsection{Lower Bounds}

Our main tool for computing lower bounds is the following construction.  Let $F$ be a Ferrers diagram and $(x,y) \in F$.  We define
\begin{align*}
L(x,y) &= \{ (x',y') \in F \mid x' \leq x \} \\
U(x,y) &= \{ (x',y') \in F \mid y' \leq y \} .
\end{align*}
The letters $L$ and $U$ stand for ``left'' and ``up'', respectively.  We refer to the first $x$ columns as \emph{$L$-columns} and the first $y$ rows as \emph{$U$-rows}.  By construction, every $L$-column has nontrivial intersection with every $U$-row.  When we draw pictures of these sets, the set $L(x,y)$ will always be depicted with a solid green line and the set $U(x,y)$ with a dashed red line.  See Figure~\ref{Fig:RedGreenRegion}.  We now prove the main result that will be useful for computing lower bounds on the gonality.

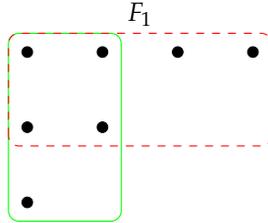
\begin{figure}[ht]
    \vspace{0.25cm}
    \centering
    \begin{tikzpicture}
    \node at (1.5,3.5) {$F_1$};
    \filldraw (0,3) circle (2pt);
    \filldraw (1,3) circle (2pt);
    \filldraw (2,3) circle (2pt);
    \filldraw (3,3) circle (2pt);
    \filldraw (0,2) circle (2pt);
    \filldraw (1,2) circle (2pt);
    \filldraw (0,1) circle (2pt);
    \draw[color=green, rounded corners] (-0.25, 0.75) rectangle (1.25, 3.25) {};
    \draw[color=red, dashed, rounded corners] (-0.25, 1.75) rectangle (3.25, 3.25) {};

    \end{tikzpicture}
    \caption{The regions $L(2,2)$, depicted with a solid green line, and $U(2,2)$, depicted with a dashed red line.}
    \label{Fig:RedGreenRegion}
\end{figure}

\begin{theorem}
\label{Thm:RedGreenRegion}
Let $F$ be a Ferrers diagram, let $(x,y) \in F$, and let $D$ be an effective divisor on $R(F)$.  If 
\[
\deg(D) < \min \{ \vert U(x,y) \vert - y, \vert L(x,y) \vert - x \},
\]
then $\rk(D) = 0$.
\end{theorem}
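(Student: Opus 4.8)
The plan is to prove the contrapositive: if $D$ is effective and $\rk(D)\ge 1$, then $\deg(D)\ge \min\{\vert U(x,y)\vert-y,\ \vert L(x,y)\vert-x\}$. So suppose toward a contradiction that $\deg(D)<\vert U(x,y)\vert-y$ and $\deg(D)<\vert L(x,y)\vert-x$. The first inequality means the chips of $D$ lying in the first $y$ rows number fewer than $\vert U(x,y)\vert-y$, so the $y$ $U$-rows have total deficit at least $y+1$; by pigeonhole some $U$-row is impoverished, and an impoverished row must contain a chip-free vertex. Symmetrically some $L$-column is impoverished and contains a chip-free vertex. I would then combine this with the following Dhar-theoretic restatement of $\rk(D)\ge 1$: for every vertex $v$ with $D(v)=0$, Dhar's burning algorithm started at $v$ cannot burn all of $R(F)$ — otherwise $D$ would be its own $v$-reduced divisor with no chip at $v$, so $D-v$ would be $v$-reduced and not effective, contradicting that $D$ is equivalent to an effective divisor containing $v$ in its support.

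Next I would start the fire at a chip-free vertex $v$ inside a poorest $U$-row (which is therefore impoverished). By the previous paragraph the unburnt set $A$ is nonempty, $v\notin A$, and $A$ can be fired to produce an effective divisor $D'\sim D$. Using the observation that any vertex of an impoverished row or column that catches fire ignites the entire row or column, the burnt set is a union of whole impoverished lines together with vertices inside non-impoverished lines, and in particular the impoverished $U$-row through $v$ is entirely unburnt. The structural feature I want to exploit is that every $U$-row meets every $L$-column, so the $U$-rows and $L$-columns form a ``complete bipartite grid'': the moment the fire engulfs one $L$-column it spreads across every impoverished $U$-row, and vice versa. Iterating Dhar's from $v$ reaches the $v$-reduced divisor $D_v$, which satisfies $D_v(v)\ge 1$ because $\rk(D)\ge 1$; the plan is to run a counting argument in parallel with this iteration, tracking the number of chips in the first $y$ rows (resp.\ the first $x$ columns) and using that each successive firing must spend at least one chip for each previously-burnt impoverished line it meets. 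This should force a dichotomy — either enough chips collect in the first $y$ rows to give $\deg(D)\ge\vert U(x,y)\vert-y$, or enough collect in the first $x$ columns to give $\deg(D)\ge\vert L(x,y)\vert-x$ — and either conclusion contradicts the assumption.

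The hard part will be this counting step. The all-or-nothing behaviour of impoverished lines is straightforward to use, but non-impoverished $U$-rows and $L$-columns burn only partially and must be handled vertex by vertex, and one must extract the bound $\vert U(x,y)\vert-y$ (or $\vert L(x,y)\vert-x$) exactly rather than a weaker estimate. I expect the cleanest route is an induction on $\vert F\vert$: after one round of burning and firing the unburnt set $A$, either the chips already present in the first $y$ rows or first $x$ columns certify the desired inequality, or the problem restricts to a strictly smaller sub-diagram whose ``$U$'' and ``$L$'' regions shrink compatibly, so that the inductive hypothesis applies. Checking that this restriction is clean — in particular that the corner $(x,y)$ together with the rightward overhangs of the $U$-rows and the downward overhangs of the $L$-columns survive the induction in the right form — is the delicate point.
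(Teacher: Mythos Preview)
Your setup is sound: the pigeonhole argument showing that both an impoverished $U$-row and an impoverished $L$-column exist is correct, and the observation that burning one vertex of an impoverished line ignites the whole line is exactly the right ingredient. There is a slip in the second paragraph: you write that ``the impoverished $U$-row through $v$ is entirely \emph{unburnt},'' but since the fire starts at $v$ and the row is impoverished, that row is entirely \emph{burnt}. More seriously, the argument from that point on is only a plan, and it is missing the idea that actually closes the proof.

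The difficulty with your iteration/induction outline is that neither of your proposed potentials behaves well. Tracking ``chips in the first $y$ rows'' or ``chips in the first $x$ columns'' does not obviously move monotonically under successive Dhar firings, and the induction on $\vert F\vert$ does not get off the ground: after firing the unburnt set you still have a divisor on all of $R(F)$, not on a smaller Ferrers rook graph, so there is no strictly smaller sub-diagram to which the inductive hypothesis applies. The paper avoids all of this by making an extremal choice of representative \emph{before} running Dhar's algorithm: among all effective divisors in $\vert D\vert$, pick one that minimizes the deficit of the poorest $L$-column, and among those, one that maximizes the number of chips in the top row. With this choice, a \emph{single} run of Dhar's from a chip-free vertex in an impoverished $U$-row forces a trichotomy: either (i) some vertex of $U(x,y)$ survives, in which case every completely-burnt $L$-column strictly gains a chip and the poorest-$L$-column deficit drops, contradicting minimality; or (ii) all of $U(x,y)$ burns but something below survives, in which case the top row strictly gains chips while no $L$-column deficit grows, contradicting the secondary maximality; or (iii) everything burns, so $D$ is $v$-reduced with $D(v)=0$ and hence $\rk(D)=0$. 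The one nontrivial check is that a partially burnt $L$-column (with $n$ vertices, $k$ unburnt, $1\le k\le n-1$) receives at least $k(n-k)\ge n-1$ chips after firing and therefore remains non-impoverished, so only the fully burnt $L$-columns matter. This extremal-representative trick is the missing idea in your proposal; once you have it, no iteration or induction is needed.
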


\begin{proof}
Without loss of generality, we may assume that $D$ has the minimal deficit in the poorest $L$-column among all divisors in $\vert D \vert$.  Among divisors minimizing this deficit, we may further assume that $D$ has the maximum number of chips in the top row.

By assumption on $\deg(D)$, there exists an impoverished $U$-row and impoverished $L$-column.  Run Dhar's Burning Algorithm starting with a vertex $v$ in an impoverished $U$-row.  Let $D'$ be the divisor obtained by firing all of the unburnt vertices.  By Lemma~\ref{Lem:Complete}, the row containing $v$ burns entirely.  Since every $U$-row has nontrivial intersection with every $L$-column, by Lemma~\ref{Lem:Complete}, every impoverished $L$-column will also burn entirely.

First, consider an $L$-column in which not all of the vertices burn.  Suppose this column has $n$ vertices and $k$ of them are unburnt.  By the above, this column is not impoverished.  Because an impoverished $U$-row burns entirely, and every $U$-row has nontrivial intersection with every $L$-column, some vertex in this column burns.  It follows that $1 \leq k \leq n-1$.  Thus, $D'$ has at least $k(n-k) \geq n-1$ chips in this column, so this column remains non-impoverished.  It follows that the only columns that may be impoverished for $D'$ are those that burn completely.

Next, suppose that some vertex in $U(x,y)$ does not burn.  Because every vertex in $U(x,y)$ is adjacent to a vertex in every $L$-column, we see that each $L$-column that burns completely must have at least one more chip in $D'$ than in $D$.  By the previous paragraph, it follows that the poorest $L$-column gains at least one chip.  But this contradicts our choice of $D$, which minimizes the deficit in the poorest $L$-column. 

Finally, suppose that all of the vertices in $U(x,y)$ burn.  If some vertices outside $U(x,y)$ do not burn, then $D'$ has more chips in the top row than $D$ does.  Moreover, each column that burns completely does not lose chips, so for each $L$-column that burns completely, $D'$ has a deficit less than or equal to that of $D$.  This contradicts our choice of $D$, which minimizes the deficit in the poorest $L$-column and maximizes the number of chips in the top row.  On the other hand, if all the vertices burn, then the divisor $D$ is $v$-reduced, despite having no chips at $v$, and thus $D$ has rank zero.
\end{proof}

\section{Examples}

In this section, we verify Conjecture~\ref{Conj:Gonality} for certain families of Ferrers diagrams.  To start, let $m,n \geq 2, \ell < m$.  Our first example will be the Ferrers diagrams:
\[
S(m,n,\ell) = \{ (x,y) \in \mathbb{N}^2 \mid x \leq m, y \leq n-1 \} \cup \{ (x,n) \mid x \leq \ell \} .
\]
The Ferrers diagram $S(m,n,\ell)$ looks like an $m \times (n-1)$ rectangle with an extra partial row.  See Figure~\ref{Fig:RectanglePlus}.  We now compute the gonality of the corresponding Ferrers rook graphs.

\begin{figure}[ht]
    \vspace{0.25cm}
    \centering
    \begin{tikzpicture}
    \filldraw (0,3) circle (2pt);
    \filldraw (1,3) circle (2pt);
    \filldraw (2,3) circle (2pt);
    \filldraw (0,2) circle (2pt);
    \filldraw (1,2) circle (2pt);
    \filldraw (2,2) circle (2pt);
    \filldraw (0,1) circle (2pt);
    \filldraw (1,1) circle (2pt);
    \filldraw (2,1) circle (2pt);
    \filldraw (0,0) circle (2pt);
    \draw[color=red, dashed, rounded corners] (-0.2, 0.75) rectangle (2.2, 3.2) {};
    \draw[color=green, rounded corners] (-0.25, -0.25) rectangle (2.25, 3.25) {};
    \end{tikzpicture}
    \caption{The Ferrers diagram $S(3,4,1)$ with the regions $L(3,3)$ and $U(3,3)$.}
    \label{Fig:RectanglePlus}
\end{figure}
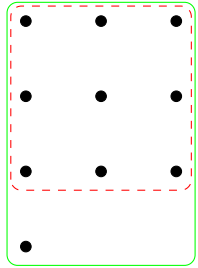

\begin{proposition}
\label{Prop:RectanglePlus}
If $n \geq 3$, we have $\gon(R(S(m,n,\ell))) = \min \{ (m-1)(n-1), mn - 2m + \ell \}$.  If $n=2$, we have $\gon(R(S(m,2,\ell))) = m-1$.
 \end{proposition}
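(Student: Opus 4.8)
The plan is to establish the upper and lower bounds separately. For the upper bound, I want to exhibit divisors of positive rank of the required degree using the machinery of Section~\ref{Sec:Bounds}. For $S(m,n,\ell)$, the first row has length $m$ and the second row has length $m$ as well (assuming $n \geq 3$), so in the notation of Conjecture~\ref{Conj:Gonality}, $k = \ell_{\text{row}} = m$, and I would check that $(m, 1) \notin F$ and also identify the relevant edge-case condition $(\ell,k-\ell+1) = (m,1) \in F$? — since $k = \ell$ here gives $(m,1)$, which is in $F$ iff $n \geq 2$; so case (1) or (2) of the conjecture applies depending on the column data. Concretely, I expect the two candidate divisors to be $D_{m,n-1}$ (removing an interior row and column, giving degree $(m-1)(n-1)$) and $D_{\ell+1,1}$ or $D_{m+1,1}$ type divisors accounting for the partial row, giving degree $mn - 2m + \ell$. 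I would apply Proposition~\ref{Prop:xyMoreThan1} to the first and Proposition~\ref{Prop:xIs1} to the second (checking the containment condition $(\ell, k-\ell+1) \in F$ precisely), concluding $\gon \leq \min\{(m-1)(n-1), mn-2m+\ell\}$.

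For the lower bound when $n \geq 3$, I would apply Theorem~\ref{Thm:RedGreenRegion} with a well-chosen $(x,y) \in F$. Taking $(x,y) = (m, n-1)$ (the bottom-right corner of the rectangular part, as in Figure~\ref{Fig:RectanglePlus}), we get $U(m,n-1) = F$ minus the partial last row, so $\vert U(m,n-1)\vert - (n-1) = m(n-1) - (n-1) = (m-1)(n-1)$, and $L(m, n-1) = F$ (all columns), so $\vert L(m,n-1)\vert - m = (mn - 2m + \ell) + m - m = \ldots$ — I need to compute $\vert F \vert = m(n-1) + \ell$, so $\vert L \vert - x = m(n-1) + \ell - m = mn - 2m + \ell$. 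Thus Theorem~\ref{Thm:RedGreenRegion} says any $D$ with $\deg(D) < \min\{(m-1)(n-1), mn-2m+\ell\}$ has rank $0$, which gives the matching lower bound. I should double-check whether a different choice of $(x,y)$ could give a stronger bound, but I expect $(m,n-1)$ is optimal since it balances the two regions against the two candidate divisors.

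For the $n = 2$ case, $S(m,2,\ell)$ is a $2$-row Ferrers diagram (a length-$m$ row atop a length-$\ell$ row), and the claim $\gon = m-1$ should follow from the same theorem with $(x,y)$ chosen in the first row — e.g. $(x,y) = (\ell, 1)$ or $(m,1)$ — together with an explicit positive-rank divisor of degree $m-1$, namely $D_{x,1}$ for a suitable $x$ in the first row (using Proposition~\ref{Prop:xIs1} with the appropriate edge condition, or directly: firing columns appropriately). Alternatively, this case may just be cited from Lemma~\ref{Lem:L} or its consequences for two-row diagrams. I would also need to verify that the edge-case hypotheses in Conjecture~\ref{Conj:Gonality} correctly predict which $D_{x,y}$ are allowed, so that the stated formula genuinely matches $\min\{\deg(D_{x,y})\}$ over the conjecturally-allowed set; this bookkeeping with the four cases is the part most likely to hide a subtlety, especially reconciling the condition on $(k'-\ell'+1, \ell')$ with the column lengths $k' = n$, $\ell' = n-1$ (for columns $\leq \ell$) or $\ell' = 1$ (for columns $> \ell$) of $S(m,n,\ell)$. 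That careful case analysis, rather than any single hard estimate, is where I expect the real work to be.
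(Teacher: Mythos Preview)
Your treatment of the case $n\ge 3$ is essentially the paper's proof. One slip: the upper-bound divisor for $(m-1)(n-1)$ should be $D_{m,n}$, not $D_{m,n-1}$, since $(m,n-1)\in F$ and Proposition~\ref{Prop:xyMoreThan1} requires $(x,y)\notin F$. With that correction your computations of $\vert U(m,n-1)\vert-(n-1)=(m-1)(n-1)$ and $\vert L(m,n-1)\vert-m=mn-2m+\ell$ match the paper exactly, and Theorem~\ref{Thm:RedGreenRegion} closes the lower bound. The final paragraph of your proposal, reconciling the formula with the four cases of Conjecture~\ref{Conj:Gonality}, is not needed to prove the proposition as stated.

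The genuine gap is the $n=2$ lower bound. Theorem~\ref{Thm:RedGreenRegion} cannot give $m-1$ here: for any $(x,y)\in S(m,2,\ell)$ one has $\vert L(x,y)\vert - x\le \ell$, because $\vert L(x,y)\vert=2x$ when $x\le\ell$ and $\vert L(x,y)\vert=x+\ell$ when $x>\ell$. Hence the minimum in Theorem~\ref{Thm:RedGreenRegion} is at most $\ell<m-1$ whenever $\ell<m-1$. Lemma~\ref{Lem:L} does not help either, since $S(m,2,\ell)$ is an L-shape only when $\ell=1$. The paper instead argues directly: among divisors in $\vert D\vert$, pick one maximizing the number of chips in the top row, start Dhar's burning algorithm at a chipless top-row vertex, and use Lemma~\ref{Lem:Complete} to burn the entire top row; any unburnt second-row vertices would, upon firing, increase the top-row chip count, contradicting the choice of $D$. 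You need this (or an equivalent ad hoc argument) to finish the $n=2$ case.
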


\begin{proof}
By Proposition~\ref{Prop:xyMoreThan1}, $D_{m,n}$ has positive rank.  Since $\deg(D_{m,n}) = (m-1)(n-1)$, we see that
\[
\gon(R(S(m,n\ell))) \leq (m-1)(n-1).
\]
Similarly, if $n \geq 3$, by Proposition~\ref{Prop:xIs1}, $D_{m+1,1}$ has positive rank.  Since $\deg(D_{m+1,1}) = mn-2m+\ell$, we see that 
\[
\gon(R(S(m,n\ell))) \leq mn-2m+\ell
\]
when $r \geq 3$.

For the reverse inequality, note that $\vert L(m,n-1) \vert = mn-m+\ell$ and $\vert U(\ell,n-1) \vert = m(n-1)$.  (See Figure~\ref{Fig:RectanglePlus}.)  By Theorem~\ref{Thm:RedGreenRegion}, it follows that if $\deg(D) < \min \{ mn-2m+\ell, (m-1)(n-1) \}$, then $D$ does not have positive rank.  Hence, $\gon(R(S(m,n\ell))) \geq \min \{ (m-1)(n-1), mn - 2m + \ell \}$.

Finally, if $n=2$, let $D$ be a divisor of degree less than $m-1$.  Without loss of generality, we may assume that $D$ has the maximum number of chips in the top row among all divisors in $\vert D \vert$.  Let $v$ be a vertex in the top row with $D(v) = 0$, and run Dhar's Burning Algorithm starting at $v$.  By Lemma~\ref{Lem:Complete}, the entire top row burns.  If there are any unburnt vertices in the second row, then firing them will increase the number of chips in the top row, contradicting our choice of $D$.  It follows that every vertex burns, hence $D$ is $v$-reduced.  Since $D(v) = 0$, we see that $D$ does not have positive rank.
\end{proof}

The smallest Ferrers diagram that is not of the form $S(m,n,\ell)$ is pictured in Figure~\ref{Fig:L}.  This diagram belongs to the family of ``L-shapes'':
\[
\mathcal{L}(m,n) = \{ (x,1) \mid x \leq m \} \cup \{ (1,y) \mid y \leq n \}.
\]
The Ferrers rook graph $R(\mathcal{L}(m,n))$ is the wedge of two complete graphs.  We compute its gonality here.

\begin{figure}[ht]
    \vspace{0.25cm}
    \centering
    \begin{tikzpicture}
    \filldraw (0,3) circle (2pt);
    \filldraw (1,3) circle (2pt);
    \filldraw (2,3) circle (2pt);
    \filldraw (0,2) circle (2pt);
    \filldraw (0,1) circle (2pt);
        \draw[color=red, dashed, rounded corners] (-0.2, 0.75) rectangle (2.2, 3.2) {};
    \draw[color=green, rounded corners] (-0.25, 0.8) rectangle (0.25, 3.25) {};
    \end{tikzpicture}
    \caption{The Ferrers Diagram $\mathcal{L}(3,3)$ with the regions $L(1,3)$ and $U(1,3)$.}
    \label{Fig:L}
\end{figure}
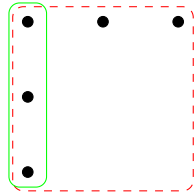

\begin{lemma}
\label{Lem:L}
We have $\gon(R(\mathcal{L}(m,n))) = \max \{m-1, n-1 \}$.
\end{lemma}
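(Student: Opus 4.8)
The plan is to prove matching upper and lower bounds. For the upper bound, observe that $R(\mathcal{L}(m,n))$ is a wedge of $K_m$ and $K_n$ glued at the vertex $(1,1)$. Without loss of generality assume $m \geq n$, so we want to exhibit a positive-rank divisor of degree $m-1$. The natural candidate coming from the conjecture is $D_{x,y}$ for an appropriate $(x,y) \notin F$: since every vertex of $\mathcal{L}(m,n)$ lies in the first row or first column, the complement of column $x$ and row $y$ (for $x,y > 1$) is empty, which is too small; instead one wants the edge case. Taking $y = 1$ and $x = m+1$ (so we remove the first row entirely), $D_{m+1,1}$ is supported on the first column minus $(1,1)$, but that has degree $n-1 < m-1$ and will not have positive rank unless $n \geq m$. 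The correct construction is to place $m-1$ chips cleverly: put one chip on each of the $m-1$ vertices $(2,1), (3,1), \dots, (m,1)$ of the long arm, i.e. take $D = D_{1,1}$ restricted appropriately. In fact the cleanest choice is the divisor $(m-1)\cdot(1,1)$, or better, the divisor that is $v$-reduced and witnesses rank. I would verify directly via Dhar's algorithm that the divisor assigning one chip to each non-corner vertex of the longer arm $K_m$ has positive rank: firing the complement of that arm moves all chips onto $(1,1)$ giving $(m-1)\cdot(1,1)$, which dominates every vertex of $K_n$ after firing $K_n \setminus \{(1,1)\}$ provided $m - 1 \geq n - 1$; and it already dominates the long arm. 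So $\gon \leq m-1 = \max\{m-1,n-1\}$.

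For the lower bound I would apply Theorem~\ref{Thm:RedGreenRegion} with a well-chosen $(x,y) \in F$. Looking at Figure~\ref{Fig:L}, the intended choice is $(1,n)$: then $L(1,n)$ is the entire first column, so $\vert L(1,n)\vert = n$ and $\vert L(1,n)\vert - 1 = n-1$; and $U(1,n) = F$ is everything, so $\vert U(1,n)\vert - n = (m+n-1) - n = m-1$. Theorem~\ref{Thm:RedGreenRegion} then says any effective divisor of degree $< \min\{n-1, m-1\}$ has rank zero. Symmetrically, using $(m,1)$ gives the bound with the roles of $m$ and $n$ swapped. Combining, no divisor of degree $< \max\{m-1,n-1\}$ has positive rank — wait, this only gives degree $< \min\{m-1,n-1\}$, which is not enough. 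So I need a sharper argument for the lower bound when, say, $m > n$: Theorem~\ref{Thm:RedGreenRegion} as stated only rules out degree below the minimum of the two arm sizes minus one.

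To close the gap I would argue directly. Suppose $m \geq n$ and let $D$ be effective of degree $d \leq m-2$; I claim $\rk(D) = 0$. Choose $D$ in its class with the maximum number of chips at the cut vertex $(1,1)$, and among those, say, minimizing chips on the short arm. If the long arm $K_m$ contains a vertex $v \neq (1,1)$ with $D(v) = 0$ (which must happen since $d \leq m-2$ leaves at least one of the $m-1$ non-corner vertices of $K_m$ empty, unless chips sit at $(1,1)$ — handle that case by noting a chip at $(1,1)$ can be pushed out), run Dhar from $v$. The long arm $K_m$ has deficit $\geq m - d \geq 2$, so it is impoverished and burns entirely once $v$ catches, including $(1,1)$. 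Once $(1,1)$ burns, the short arm is attached to a burning vertex; since the short arm $K_n$ together with $(1,1)$ has total deficit $\geq (n + \text{(long-arm excess)}) - \text{(chips on short arm)}$, and any unburnt configuration on $K_n$ would let us fire it to increase chips at $(1,1)$ — contradicting maximality — the whole graph burns. Hence $D$ is $v$-reduced with $D(v)=0$, so $\rk(D)=0$. The main obstacle is precisely this last step: carefully justifying that one may assume all chips have been pushed off $(1,1)$ (or handling the corner case) and that Dhar's algorithm burns the short arm, using the maximality/extremality choice of representative exactly as in the $n=2$ case of Proposition~\ref{Prop:RectanglePlus}. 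This mirrors that argument closely, so I expect it to go through with minor bookkeeping.
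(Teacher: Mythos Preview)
Your upper-bound divisor $(2,1)+\cdots+(m,1)$ is exactly $D_{1,\,n+1}$, the transpose of the divisor the paper uses; the paper (working under WLOG $m\le n$) simply invokes Proposition~\ref{Prop:xIs1} rather than verifying by hand. Your hand verification is garbled---``firing the complement of that arm'' should be ``firing $\{(2,1),\dots,(m,1)\}$'', and to reach $(1,j)$ from $(m-1)\cdot(1,1)$ you should fire the entire first row, not $K_n\setminus\{(1,1)\}$---but the construction is correct and this is the same upper bound as the paper's.

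For the lower bound you have spotted something real. The paper applies Theorem~\ref{Thm:RedGreenRegion} with $(x,y)=(1,n)$ and asserts $\gon\ge n-1$; but as you compute, $\vert U(1,n)\vert-n=m-1$ and $\vert L(1,n)\vert-1=n-1$, so the theorem only yields $\gon\ge\min\{m-1,n-1\}$, and one checks that no other choice of $(x,y)\in\mathcal{L}(m,n)$ does better. So the paper's proof, as written, has exactly the gap you identified when $m\neq n$.

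Your direct argument closes it and is correct after minor cleanup. With $m\ge n$ and $\deg(D)\le m-2$, choose $D$ in its class maximizing $D((1,1))$. Some $v=(j,1)$ with $j\ge2$ has no chip (else the first row alone carries $\ge m-1$ chips); start Dhar there. Row~$1$ has deficit $\ge2$, so by Lemma~\ref{Lem:Complete} it burns entirely, including $(1,1)$. If any vertex of the short arm survives, the unburnt set $S\subseteq\{(1,2),\dots,(1,n)\}$ is disjoint from the neighborhood of $v$, so firing $S$ yields an \emph{effective} divisor with strictly more chips at $(1,1)$, contradicting maximality. Hence everything burns and $D$ is $v$-reduced with $D(v)=0$. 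This is essentially the $n=2$ argument from Proposition~\ref{Prop:RectanglePlus} transplanted to the cut-vertex setting, and it is what the paper's citation of Theorem~\ref{Thm:RedGreenRegion} would need to be replaced by.
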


\begin{proof}
Without loss of generality, assume that $m \leq n$.  By Proposition~\ref{Prop:xIs1}, we see that $D_{m+1,1}$ has positive rank.  Since $\deg (D_{m+1,n+1}) = n-1$, we see that $\gon(R(L(m,n))) \leq n-1$.  For the reverse inequality, note that $\vert L(1,n) \vert = n$ and $\vert U(1,n) \vert = m+n-1$.  By Theorem~\ref{Thm:RedGreenRegion}, it follows that $\gon(R(L(m,n))) \geq n-1$.
\end{proof}

The smallest Ferrers diagram that is neither of the form $S(m,n,\ell)$ nor of the form $L(m,n)$ is the $3 \times 3$ right triangle pictured in Figure~\ref{fig:TK3}.  We write $T_n$ for the $n \times n$ right triangle:
\[
T_n = \{ (x,y) \in \mathbb{N}^2 \mid x+y \leq n+1 \} .
\]
In general, we do not know how to compute the gonality of $R(T_n)$ for all $n$, though Conjecture~\ref{Conj:Gonality} predicts that $\gon(R(T_n)) = {{n}\choose{2}}$.  We show that this holds for a few small values of $n$.

\begin{figure}[ht]
    \vspace{0.25cm}
    \centering
    \begin{tikzpicture}
    \filldraw (0,3) circle (2pt);
    \filldraw (1,3) circle (2pt);
    \filldraw (2,3) circle (2pt);
    \filldraw (0,2) circle (2pt);
    \filldraw (1,2) circle (2pt);
    \filldraw (0,1) circle (2pt);
    \end{tikzpicture}
    \caption{The Ferrers diagram $T_3$.}
    \label{fig:TK3}
\end{figure}
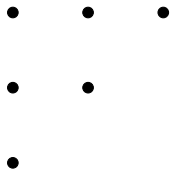

\begin{theorem}
\label{Thm:T3}
We have $\gon(R(T_3)) = 3$.
\end{theorem}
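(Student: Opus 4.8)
The plan is to sandwich $\gon(R(T_3))$ between an upper bound coming from an explicit divisor of the form $D_{x,y}$ and a lower bound coming from Theorem~\ref{Thm:RedGreenRegion}, and then to observe that the two bounds coincide at $3$. Recall that $T_3$ has the six vertices $(1,1),(2,1),(3,1),(1,2),(2,2),(1,3)$, with first row of length $k=3$ and second row of length $\ell=2$ (and, by symmetry, the same for columns).

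For the upper bound I would exhibit a divisor of degree $3$ with positive rank. The vertex $(3,2)$ lies outside $T_3$ and has both coordinates greater than $1$, so Proposition~\ref{Prop:xyMoreThan1} applies and $D_{3,2}$ has positive rank; a direct count gives $D_{3,2} = (1,1)+(2,1)+(1,3)$, which has degree $3$. (Alternatively, since $(\ell,k-\ell+1)=(2,2)\in T_3$, Proposition~\ref{Prop:xIs1} shows that $D_{4,1}$ has positive rank, again of degree $3$.) Hence $\gon(R(T_3)) \leq 3$.

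For the lower bound I would apply Theorem~\ref{Thm:RedGreenRegion} with the interior corner $(x,y)=(2,2)$. Here $L(2,2)$ consists of every vertex of $T_3$ except $(3,1)$, so $\vert L(2,2)\vert - x = 5-2 = 3$, and $U(2,2)$ consists of every vertex except $(1,3)$, so $\vert U(2,2)\vert - y = 5-2 = 3$. Thus every effective divisor on $R(T_3)$ of degree strictly less than $\min\{3,3\}=3$ has rank zero, which gives $\gon(R(T_3)) \geq 3$. Combining this with the upper bound yields $\gon(R(T_3)) = 3$.

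Since both ingredients are immediate consequences of results already established, I do not expect a genuine obstacle in this case; the only point requiring a little care is selecting $(x,y)=(2,2)$ in the lower bound, which is precisely the choice that makes the $L$- and $U$-regions as large as possible relative to their offsets $x$ and $y$, and hence matches the conjectural value.
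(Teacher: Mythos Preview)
Your proof is correct and follows essentially the same route as the paper's: an upper bound from Proposition~\ref{Prop:xyMoreThan1} (the paper uses the symmetric choice $D_{2,3}$ instead of your $D_{3,2}$) and a lower bound from Theorem~\ref{Thm:RedGreenRegion} with $(x,y)=(2,2)$, giving $\vert L(2,2)\vert=\vert U(2,2)\vert=5$ and hence gonality at least $3$.
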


\begin{proof}
By Proposition~\ref{Prop:xyMoreThan1}, we see that $D_{2,3}$ has positive rank.  Since $\deg(D_{2,3}) = 3$, it follows that 
\[
\gon(R(T_3)) \leq 3.
\]
For the reverse inequality, note that $\vert L(2,2) \vert = \vert U(2,2) \vert = 5$.  By Theorem~\ref{Thm:RedGreenRegion}, it follows that 
\[
\gon(R(T_3)) \geq 3.
\]
\end{proof}

\begin{figure}[ht]
    \vspace{0.25cm}
    \centering
    \begin{tikzpicture}
    \filldraw (0,3) circle (2pt);
    \filldraw (1,3) circle (2pt);
    \filldraw (2,3) circle (2pt);
    \filldraw (3,3) circle (2pt);
    \filldraw (0,2) circle (2pt);
    \filldraw (1,2) circle (2pt);
    \filldraw (2,2) circle (2pt);
    \filldraw (0,1) circle (2pt);
    \filldraw (1,1) circle (2pt);
    \filldraw (0,0) circle (2pt);
    \draw[color=green, rounded corners] (-0.25, -0.25) rectangle (1.25, 3.25) {};
    \draw[color=red, dashed, rounded corners] (-0.25, 0.75) rectangle (3.25, 3.25) {};
    \end{tikzpicture}
    \caption{The Ferrers diagram $T_4$, with the regions $L(2,3)$ and $U(2,3)$.}
    \label{fig:TK4}
\end{figure}
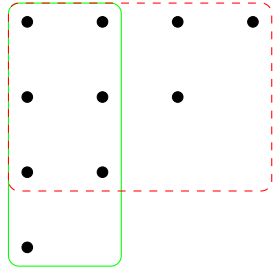

\begin{theorem}
\label{Thm:T4}
We have $\gon(R(T_4)) = 6$.
\end{theorem}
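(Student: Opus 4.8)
The plan is to prove $\gon(R(T_4)) = 6$ by establishing matching upper and lower bounds, just as in the proof of Theorem~\ref{Thm:T3}, but with the lower bound requiring more care because the naive application of Theorem~\ref{Thm:RedGreenRegion} is not quite strong enough. First I would handle the upper bound: since $T_4$ is the right triangle with $x + y \le 5$, the vertex $(2,4)$ lies outside $F$ with $x,y > 1$, so by Proposition~\ref{Prop:xyMoreThan1} the divisor $D_{2,4}$ has positive rank; its degree is the number of vertices of $T_4$ not in column $2$ or row $4$, which is $\binom{4}{2} = 6$, giving $\gon(R(T_4)) \le 6$. (One should check a couple of candidate $D_{x,y}$ to confirm $6$ is the minimum degree predicted by Conjecture~\ref{Conj:Gonality}, but only $D_{2,4}$ is needed for the bound itself.)

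For the lower bound I would first see how far Theorem~\ref{Thm:RedGreenRegion} gets us. The natural choice is $(x,y) = (2,3)$, as drawn in Figure~\ref{fig:TK4}: there $\vert L(2,3) \vert = 7$ and $\vert U(2,3) \vert = 7$, so Theorem~\ref{Thm:RedGreenRegion} tells us every divisor of degree $< \min\{7 - 3, 7 - 2\} = 4$ has rank zero. That only yields $\gon(R(T_4)) \ge 4$, short of $6$. So the main work is to rule out divisors of degree $4$ and degree $5$ by hand. The approach I would take is a direct Dhar's-burning argument refining the proof of Theorem~\ref{Thm:RedGreenRegion}: suppose $D$ is effective of degree $d \in \{4,5\}$ with positive rank, and among all equivalent divisors choose one minimizing the deficit of the poorest $L$-column (using the $L(2,3)$, $U(2,3)$ regions), and among those maximizing the number of chips in the top row. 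With only $4$ or $5$ chips spread over a diagram with columns of sizes $4,3,2,1$ (the $L$-columns being the first two, of sizes $4$ and $3$, total $7$ vertices), there must be an impoverished $L$-column and an impoverished $U$-row, and the burning argument forces a contradiction unless the whole graph burns — in which case $D$ is $v$-reduced with $D(v) = 0$, so $\rk(D) = 0$. The point is that with so few chips, the "exceptional" cases at the end of the proof of Theorem~\ref{Thm:RedGreenRegion} (where chips get pushed into the top row or into a completely-burnt $L$-column) either immediately contradict minimality/maximality or leave the divisor reduced.

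The hard part will be organizing the case analysis for $d = 5$ cleanly: this is the boundary case where a divisor could plausibly be concentrated in the first column (size $4$) plus one more chip, and one must verify that the burning still propagates. I would structure it by first arguing the first column cannot be the poorest $L$-column after optimization (if it carried $\ge 3$ chips, the second column, of size $3$, has deficit $\ge 3 - 2 = 1$ but more likely $\ge 2$ given the total budget, making it impoverished and poorer), then running Dhar from a vertex in an impoverished $U$-row and tracking chip counts column by column exactly as in Theorem~\ref{Thm:RedGreenRegion}'s proof, using Lemma~\ref{Lem:Complete} to conclude each impoverished row/column burns completely once one of its vertices does. If a short direct argument proves elusive, a fallback is to invoke that $R(T_4)$ has $10$ vertices and to note the gonality is bounded below by edge-connectivity-type or treewidth-type invariants, but I expect the Dhar argument to go through with a page of careful bookkeeping. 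Concluding, combining $\gon(R(T_4)) \le 6$ with $\gon(R(T_4)) \ge 6$ gives the result.
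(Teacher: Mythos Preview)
Your overall architecture (upper bound via $D_{2,4}$, then a refined Dhar argument for the lower bound) matches the paper, but two concrete errors derail the lower bound.

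First, a miscount: $U(2,3)$ consists of the first three rows of $T_4$, which have $4+3+2=9$ vertices, not $7$. Thus Theorem~\ref{Thm:RedGreenRegion} gives $\deg(D) < \min\{9-3,\,7-2\} = 5$, so $\gon(R(T_4)) \ge 5$ directly and the $d=4$ case is unnecessary. Only $d=5$ requires extra work.

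Second, and more seriously, your plan for $d=5$ rests on the assertion that ``there must be an impoverished $L$-column and an impoverished $U$-row.'' The $U$-row part is fine (the three $U$-rows contain $9$ vertices and only $5$ chips), but the $L$-column claim is false: with exactly $3$ chips in column~$1$ (size $4$, deficit $1$) and $2$ chips in column~$2$ (size $3$, deficit $1$), neither $L$-column is impoverished. Your subsequent remark that column~$2$ would have deficit ``more likely $\ge 2$'' is not an argument, and in fact this $3+2$ distribution is precisely the residual case after the proof of Theorem~\ref{Thm:RedGreenRegion} disposes of everything else. The paper handles it by running Dhar's algorithm from $(4,1)$ and carrying out a short case analysis on where the $2$ chips in column~$2$ sit (and similarly for column~$1$) to show that all vertices in the top two rows burn; from there the usual minimality/maximality contradiction finishes. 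Your proposal never engages with this configuration, so as written the lower bound argument has a genuine gap.
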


\begin{proof}
By Proposition~\ref{Prop:xyMoreThan1}, we have $\gon(R(T_4)) \leq 6$. Now, let $D$ be a divisor of degree 5.  Note that $\vert L(2,3) \vert = 7$ and $\vert U(2,3) \vert = 9$.  As in the proof of Theorem~\ref{Thm:RedGreenRegion}, we may assume that $D$ has the minimal deficit in the poorest $L$-column among all divisors in $\vert D \vert$.  Among divisors minimizing this deficit, we may further assume that $D$ has the maximum number of chips in the top row.  If one of the $L$-columns is impoverished, then the proof of Theorem~\ref{Thm:RedGreenRegion} shows that $D$ does not have positive rank.  We may therefore assume that neither $L$-column is impoverished.

Because neither $L$-column is impoverished, the first column has exactly 3 chips and the second column has exactly 2 chips.  Now, run Dhar's burning algorithm starting at $v=(4,1)$.  We will show, by case analysis, that every vertex in the first two rows burns.  As a consequence, either the entire graph burns, hence $D$ is not $v$-reduced and therefore does not have positive rank, or some vertex in the bottom two rows is unburnt.  In the latter case, after firing the unburnt vertices, the top row will gain chips and the left two columns will not lose chips, contradicting our assumptions on $D$.

Because they have no chips on them, the vertices $(3,1)$ and $(3,2)$ will burn.  The vertex $(2,1)$ will burn unless it has both of the second column's 2 chips, in which case $(2,2)$ would burn.  If $(2,2)$ burns, then $(2,1)$ is adjacent to 3 burnt vertices, so it must burn as well.  If $(2,1)$ has fewer than 2 chips, then it burns, and $(2,2)$ will burn unless it has 2 chips, in which case $(2,3)$ will burn, and $(2,2)$ will burn thereafter.  By a similar argument, $(1,1)$ and $(1,2)$ must burn. Thus, every vertex in the top two rows burns.
\end{proof}


\begin{theorem}
\label{Thm:T5}
We have $\gon(R(T_5)) = 10$.
\end{theorem}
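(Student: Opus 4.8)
The plan is to follow the template established in the proofs of Theorems~\ref{Thm:T3} and~\ref{Thm:T4}. First, the upper bound $\gon(R(T_5)) \leq 10$ is immediate: by Proposition~\ref{Prop:xyMoreThan1}, the divisor $D_{2,5}$ has positive rank, and $\deg(D_{2,5}) = 4 \cdot 4 = \binom{5}{2} \cdot \tfrac{16}{10}$... more simply, $D_{2,5}$ is the sum of all vertices of $T_5$ not in column $2$ or row $5$, which has degree $10$. So the whole content is the lower bound: every effective divisor $D$ of degree $9$ on $R(T_5)$ has rank $0$.

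For the lower bound I would take $(x,y) = (2,4)$, so that $L(2,4)$ is the first two columns of $T_5$, with $\vert L(2,4) \vert = 5 + 4 = 9$, and $U(2,4)$ is the first four rows, with $\vert U(2,4) \vert = 5+4+3+2 = 14$. Then $\min\{\vert L(2,4)\vert - 2, \vert U(2,4)\vert - 4\} = \min\{7, 10\} = 7$, which is not yet $10$, so Theorem~\ref{Thm:RedGreenRegion} alone only rules out degree $\leq 6$. As in the proof of Theorem~\ref{Thm:T4}, I would run the reduction argument: assume $D$ has minimal deficit in the poorest $L$-column among $\vert D \vert$, and subject to that, maximal number of chips in the top row. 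If either $L$-column is impoverished, the argument in the proof of Theorem~\ref{Thm:RedGreenRegion} applies verbatim and shows $\rk(D) = 0$. So I may assume neither of the first two columns is impoverished, meaning the first column has at least $4$ chips and the second column has at least $3$ chips — already $7$ of the $9$ chips. Hence the last three columns together contain at most $2$ chips. I would then run Dhar's burning algorithm starting at $v = (5,1)$ (the unique vertex of column $5$, which has at most ... in fact must have $0$ chips, since columns $3$–$5$ hold at most $2$ chips and if column $5$'s vertex had a chip we could still pick $v$ among $(3,1),(4,1)$ with no chip; more carefully I'd pick $v$ in row $1$ in one of columns $3,4,5$ with no chip, which exists since those three vertices share at most $2$ chips). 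The goal, matching Theorem~\ref{Thm:T4}, is to show that every vertex in the first few rows burns, so that either the whole graph burns (and $D$ is $v$-reduced with $D(v)=0$, hence rank $0$) or some vertex in a lower row is unburnt (and firing the unburnt set adds chips to the top row without decreasing any $L$-column deficit, contradicting the choice of $D$).

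The main obstacle — and the reason this is a full theorem rather than a one-line corollary of Theorem~\ref{Thm:RedGreenRegion} — is the case analysis in the burning step, which is genuinely more involved than for $T_4$ because there are now three ``rich'' columns ($1$, $2$, and possibly $3$) whose chip distributions must be tracked, and the top row has length $5$. I would organize it as follows: columns $3$, $4$, $5$ among them have $\leq 2$ chips, so at most two vertices in row $1$ outside columns $1,2$ can resist the initial fire, and a short argument (as in Theorem~\ref{Thm:T4}, using that a column with $k$ chips, $1 \leq k \leq n-1$, always burns once enough neighbors are lit) shows rows $1$ and $2$ of columns $3,4,5$ burn. Then, using that column $1$ has $\le 5$ chips spread over $5$ vertices and column $2$ has $\le 4$ chips over $4$ vertices — and crucially that \emph{neither is impoverished}, so the deficits are exactly $1$ — Lemma~\ref{Lem:Complete} and the fact that once two or three vertices of a row are burnt the whole row burns let me propagate the fire through columns $2$ and then $1$ in the top two (or three) rows, just as in the $T_4$ proof but with one more layer of bookkeeping. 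I'd need to be careful that the degree budget ($9 = $ at least $4 + 3$ in columns $1,2$, leaving at most $2$ elsewhere) is tight enough that no configuration escapes; I expect it is, precisely because Conjecture~\ref{Conj:Gonality} predicts $\binom{5}{2} = 10$. If the raw case analysis on $v=(5,1)$ turns out to have a stubborn exceptional configuration, the fallback is to also exploit the vertical symmetry of $T_5$ (swap rows and columns) to assume additionally that neither of the first two \emph{rows} is impoverished, which further constrains the chip distribution and should eliminate any remaining cases.
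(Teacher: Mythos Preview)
Your overall template matches the paper's, but your choice of $(x,y)=(2,4)$ makes the argument substantially messier than the paper's choice of $(x,y)=(3,3)$, and a couple of the claims you rely on are not correct as stated.

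With $(3,3)$ one has $\vert L(3,3)\vert=\vert U(3,3)\vert=12$, so the hypothesis of Theorem~\ref{Thm:RedGreenRegion} just barely fails at degree $9$, and the symmetric case split becomes: either no $L$-column is impoverished, or no $U$-row is. In the first case the column chip counts are forced to be \emph{exactly} $4,3,2,0,0$, which pins down the distribution completely; the paper then burns from $(5,1)$, argues via Lemma~\ref{Lem:Complete} that the unburnt set must be a union of entire rows, and gets the contradiction from the top-row maximality. In the second case the paper burns from $(1,5)$, shows the unburnt set is a union of columns, and is left with a single explicit exceptional divisor $D=4\,(3,1)+3\,(3,2)+2\,(3,3)$, which it dispatches by hand (firing column $3$ gives $D_{3,4}-(1,5)$, and this is $(1,5)$-reduced).

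Your $(2,4)$ choice loses both simplifications. First, ``neither $L$-column impoverished'' only gives column~$1\geq 4$ chips and column~$2\geq 3$; your assertions that ``the deficits are exactly $1$'' and that ``column $1$ has $\leq 5$ chips'' are unjustified---chips can pile up, and column~$1$ could in principle carry many more than $5$. So columns $3$--$5$ may hold anywhere from $0$ to $2$ chips and columns $1$--$2$ are not pinned down, leaving a genuine spread of configurations to burn through rather than one. Second, your fallback of ``also assume neither of the first two rows is impoverished'' cannot simply be layered onto the existing reduction: $D$ was already chosen to minimize the poorest $L$-\emph{column} deficit within $\vert D\vert$, and there is no reason the same representative minimizes any row-based quantity. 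The paper handles this asymmetry not by stacking assumptions but by running a \emph{separate} burning argument from $(1,5)$ in the row case and then explicitly checking the one surviving divisor---a step your plan does not anticipate.
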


\begin{proof}
By Proposition~\ref{Prop:xyMoreThan1}, we have $\gon(R(T_4)) \leq 10$. Now, let $D$ be a divisor of degree 9.  Note that $\vert L(3,3) \vert = \vert U(3,3) \vert = 12$.  As in the proof of Theorem~\ref{Thm:RedGreenRegion}, we may assume that $D$ has the minimal deficit in the poorest $L$-column among all divisors in $\vert D \vert$.  Among divisors minimizing this deficit, we may further assume that $D$ has the maximum number of chips in the top row.  If both one of the $L$-columns and one of the $U$-rows are impoverished, then the proof of Theorem~\ref{Thm:RedGreenRegion} shows that $D$ does not have positive rank.  We may therefore assume that either none of the $L$-columns is impoverished or none of the $U$-rows is impoverished.

First, assume that none of the $L$-columns is impoverished.  It follows that the first column has exactly 4 chips, the second column exactly 3, and the third column exactly 2.  Now, runs Dhar's burning algorithm starting at $v=(5,1)$.  Because they have no chips on them, $(4,1)$ and $(4,2)$ will also burn.  As in the proof of Theorem~\ref{Thm:T4}, every vertex in the top two rows will burn as well.

Since there are only 4 chips in the first column, by Lemma~\ref{Lem:Complete}, the entire column will burn unless all 4 chips are on the same vertex.  Similarly, the entire second column will burn unless all 3 chips are on the same vertex, and the entire third column will burn unless both chips are on the same vertex.  Moreover, these vertices will burn unless every vertex in the same row does not burn.  Thus, the set of unburnt vertices is a union of rows. If we fire every unburnt vertex, then the resulting divisor has more chips in the top row, and every column has the same number of chips, contradicting our assumptions on $D$.

Finally, assume that none of the $U$-rows are impoverished.  Run Dhar's burning algorithm starting at $v'=(1,5)$.  By the same argument as the previous paragraph, we see that the first two columns will burn, and the set of unburnt vertices is a union of columns.  If this set does not contain the third column, then firing the unburnt vertices increases the number of chips in each of the $L$-columns, contradicting our choice of $D$.  If this set does contain the third column, then we must have $D = 4(3,1) + 3(3,2) + 2(3,3)$.  Firing the third column, we see that $D$ is equivalent to $D_{3,4} - v'$.  Running Dhar's burning algorithm starting at $v'$, we see that $D_{3,4} - v'$ is $v'$-reduced, hence $D$ does not have positive rank.
\end{proof}





To close, we verify Conjecture~\ref{Conj:Gonality} for all Ferrers diagrams $F$ with $\vert F \vert \leq 8$.

\begin{theorem}
\label{Thm:LessThan8}
Conjecture~\ref{Conj:Gonality} holds for all Ferrers diagrams $F$ with $\vert F \vert \leq 8$.
\end{theorem}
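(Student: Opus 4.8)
The plan is to reduce the statement to a finite, and in fact quite short, list of cases. First I record a symmetry: if $F^{\top}$ denotes the conjugate Ferrers diagram (reflection across the main diagonal), then $(x,y)\mapsto(y,x)$ is a graph isomorphism $R(F)\to R(F^{\top})$, so $\gon(R(F))=\gon(R(F^{\top}))$; moreover this isomorphism carries $D_{x,y}$ to $D_{y,x}$ and interchanges the roles of $(k,\ell)$ and $(k',\ell')$ in Conjecture~\ref{Conj:Gonality}, so the four cases of the conjecture for $F^{\top}$ are precisely the four cases for $F$ under the obvious relabeling. Hence Conjecture~\ref{Conj:Gonality} holds for $F$ if and only if it holds for $F^{\top}$, and it suffices to treat one diagram from each conjugate pair. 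Next, every diagram with at most two rows or at most two columns, and every rectangle, is handled by Proposition~\ref{Prop:RectanglePlus} (with $n=2$), by \cite{Speeter}, and by the transpose symmetry; every diagram of the form $S(m,n,\ell)$ is handled by Proposition~\ref{Prop:RectanglePlus}; every $\mathcal{L}(m,n)$ by Lemma~\ref{Lem:L}; and $T_3$ by Theorem~\ref{Thm:T3} (the only triangle with $\vert F\vert\leq 8$). Enumerating the partitions of $1,\dots,8$ and discarding all of these, one is left, up to conjugation, with exactly five diagrams: $(4,2,1)$, $(5,2,1)$, $(4,3,1)$, $(4,2,2)$, and $(4,2,1,1)$.

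For each of these five diagrams I would prove the matching upper and lower bounds separately. For the upper bound I would compute $k,\ell,k',\ell'$, decide which of the four cases of Conjecture~\ref{Conj:Gonality} applies by testing whether $(\ell,k-\ell+1)$ and $(k'-\ell'+1,\ell')$ lie in $F$, and then exhibit the divisor $D_{x,y}$ of minimal allowed degree: Proposition~\ref{Prop:xyMoreThan1} certifies positive rank when $x,y>1$, and Proposition~\ref{Prop:xIs1} or its conjugate handles the boundary choices. This gives $\gon(R(F))\leq\min\{\deg(D_{x,y})\}$, the conjectured value.

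For the lower bound I would first apply Theorem~\ref{Thm:RedGreenRegion} directly, choosing $(x,y)\in F$ so as to maximize $\min\{\vert U(x,y)\vert-y,\ \vert L(x,y)\vert-x\}$; when this maximum already equals the conjectured gonality, we are done. For the diagrams where it falls one short of the conjectured value — the same phenomenon already seen for $T_4$ and $T_5$ — I would argue exactly as in the proofs of Theorems~\ref{Thm:T4} and~\ref{Thm:T5}: take a divisor $D$ of degree one below the conjectured value; normalize (as in the proof of Theorem~\ref{Thm:RedGreenRegion}) so that $D$ has minimal deficit in its poorest $L$-column and, among those, maximal number of chips in the top row; reduce to the situation in which no $L$-column, or symmetrically no $U$-row, is impoverished, which forces the chip counts in the first few columns (resp. rows); and then run Dhar's burning algorithm from a no-chip corner vertex of the top row (resp. first column). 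Using Lemma~\ref{Lem:Complete} and a short case analysis of where the forced chips sit, the first two rows (resp. columns) must burn, and then either the whole graph burns — so $D$ is $v$-reduced with $D(v)=0$ and has rank $0$ — or the unburnt vertices form a union of full rows (resp. columns) whose firing increases the number of chips in the top row while decreasing no $L$-column deficit, contradicting the normalization. Together with the families already handled, this covers every $F$ with $\vert F\vert\leq 8$.

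I expect the bookkeeping of the first paragraph to be routine but delicate: one must be sure the five-diagram list is complete and that each cited family genuinely covers what is claimed, including the boundary conditions of Conjecture~\ref{Conj:Gonality}. The real obstacle is the Dhar's-burning case analysis for the ``fat'' diagrams $(4,2,2)$ and $(4,2,1,1)$: as with $T_5$, one may have to branch on whether it is an $L$-column or a $U$-row that is non-impoverished, and the burning argument itself splits into several sub-cases according to how the forced chips concentrate on single vertices of the short columns or rows.
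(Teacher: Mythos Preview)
Your proposal is correct and follows essentially the same route as the paper: the same five residual diagrams are identified, the upper bounds come from Propositions~\ref{Prop:xyMoreThan1} and~\ref{Prop:xIs1}, and the lower bounds come either directly from Theorem~\ref{Thm:RedGreenRegion} or via the normalized Dhar's-burning refinement in the style of Theorems~\ref{Thm:T4} and~\ref{Thm:T5}. Your expectation about which diagrams are ``fat'' is slightly off --- in the paper it is $(4,3,1)$ and $(4,2,2)$ that are dispatched directly by Theorem~\ref{Thm:RedGreenRegion}, while $(4,2,1)$, $(5,2,1)$, and $(4,2,1,1)$ require the burning refinement (and for $(5,2,1)$ the direct bound falls two short, not one, though the argument still goes through because the $U$-row side of the hypothesis is satisfied) --- but this does not affect the soundness of your plan.
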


\begin{proof}
Figure~\ref{Fig:LessThan8} depicts all Ferrers diagrams $F$ with $\vert F \vert \leq 8$, up to transpose, that have not been covered by previous cases.

\begin{figure}[ht]
    \vspace{0.25cm}
    \centering
    \begin{tikzpicture}
    \node at (1.5,3.5) {$F_1$};
    \filldraw (0,3) circle (2pt);
    \filldraw (1,3) circle (2pt);
    \filldraw (2,3) circle (2pt);
    \filldraw (3,3) circle (2pt);
    \filldraw (0,2) circle (2pt);
    \filldraw (1,2) circle (2pt);
    \filldraw (0,1) circle (2pt);

    \node at (7,3.5) {$F_2$};
    \filldraw (5,3) circle (2pt);
    \filldraw (6,3) circle (2pt);
    \filldraw (7,3) circle (2pt);
    \filldraw (8,3) circle (2pt);
    \filldraw (9,3) circle (2pt);
    \filldraw (5,2) circle (2pt);
    \filldraw (6,2) circle (2pt);
    \filldraw (5,1) circle (2pt);

    \node at (1.5,-0.5) {$F_3$};
    \filldraw (0,-1) circle (2pt);
    \filldraw (1,-1) circle (2pt);
    \filldraw (2,-1) circle (2pt);
    \filldraw (3,-1) circle (2pt);
    \filldraw (0,-2) circle (2pt);
    \filldraw (1,-2) circle (2pt);
    \filldraw (2,-2) circle (2pt);
    \filldraw (0,-3) circle (2pt);

    \node at (6.5,-0.5) {$F_4$};
    \filldraw (5,-1) circle (2pt);
    \filldraw (6,-1) circle (2pt);
    \filldraw (7,-1) circle (2pt);
    \filldraw (8,-1) circle (2pt);
    \filldraw (5,-2) circle (2pt);
    \filldraw (6,-2) circle (2pt);
    \filldraw (5,-3) circle (2pt);
    \filldraw (6,-3) circle (2pt);

    \node at (1.5,-4.5) {$F_5$};
    \filldraw (0,-5) circle (2pt);
    \filldraw (1,-5) circle (2pt);
    \filldraw (2,-5) circle (2pt);
    \filldraw (3,-5) circle (2pt);
    \filldraw (0,-6) circle (2pt);
    \filldraw (1,-6) circle (2pt);
    \filldraw (0,-7) circle (2pt);
    \filldraw (0,-8) circle (2pt);
    
    \end{tikzpicture}
    \caption{Ferrers diagrams $F$ with $\vert F \vert \leq 8$, not covered by previous cases.}
    \label{Fig:LessThan8}
\end{figure}
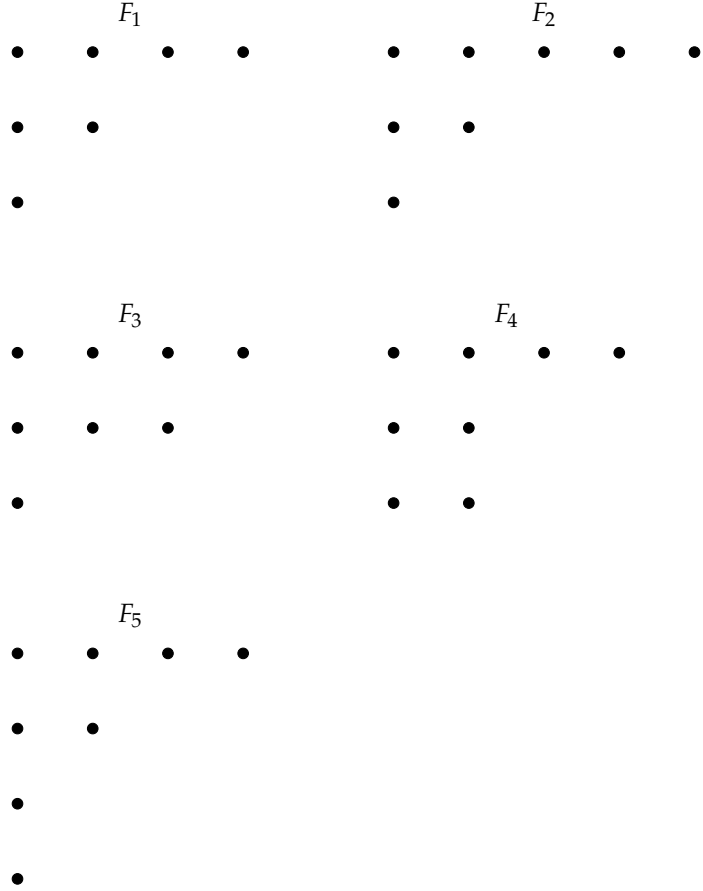

We begin by showing that $\gon(R(F_3)) = \gon(R(F_4)) = 4$.  By Proposition~\ref{Prop:xIs1}, both gonalities are at most 4.  For $F_3$, we see that $\vert L(3,2) \vert = \vert U(3,2) \vert = 7$.  Similarly, for $F_4$, we see that $\vert L(2,3) \vert = 6$ and $\vert U(2,3) \vert = 8$.  In both cases, it follows from Theorem~\ref{Thm:RedGreenRegion} that the gonality is at least 4.

We now show that $\gon(R(F_1)) = 4$.  By Proposition~\ref{Prop:xIs1}, we have $\gon(R(F_1)) \leq 4$.  For the reverse inequality, note that $\vert L(2,2) \vert = 5$ and $\vert U(2,2) \vert = 6$.  Now, let $D$ be a divisor of degree 3.  As in the proof of Theorem~\ref{Thm:RedGreenRegion}, we may assume that $D$ has the minimal deficit in the poorest $L$-column among all divisors in $\vert D \vert$.  Among divisors minimizing this deficit, we may further assume that $D$ has the maximum number of chips in the top row.  If one of the $L$-columns is impoverished, then the proof of Theorem~\ref{Thm:RedGreenRegion} shows that $D$ does not have positive rank.  We may therefore assume that neither $L$-column is impoverished.  This implies that the first column has exactly 2 chips and the second column has exactly one.  Now, run Dhar's burning algorithm starting at $v=(4,1)$.  Then, by Lemma~\ref{Lem:Complete}, the whole top row burns.  Following the proof of Theorem~\ref{Thm:RedGreenRegion}, we see that either the whole graph burns, in which case $D$ does not have positive rank, or $D$ is equivalent to an effective divisor with the same number of chips in each of the first two columns, and more chips in the top row, contradicting our choice of $D$.

The other cases are similar.  By Proposition~\ref{Prop:xIs1}, we have $\gon(R(F_2) \leq 5$.  For the reverse inequality, note that $\vert L(2,2) \vert = 5$ and $\vert U(2,2) \vert = 7$.  Now, let $D$ be a divisor of degree 4.  We again assume that $D$ has the minimal deficit in the poorest $L$-column among all divisors in $\vert D \vert$.  Among divisors minimizing this deficit, we may further assume that $D$ has the maximum number of chips in the top row.  If one of the $L$-columns is impoverished, then the proof of Theorem~\ref{Thm:RedGreenRegion} shows that $D$ does not have positive rank.  We may therefore assume that neither $L$-column is impoverished.  Now, run Dhar's burning algorithm starting at a vertex with no chip in the top row.  There are at least two such vertices, and the other burns as well.  Now there are two cases: either there is a third vertex in the top row that does not have a chip, or the vertex $(2,1)$ has only one chip.  In either case, by Lemma~\ref{Lem:Complete}, the whole top row burns.  The rest of the argument follows exactly as in the previous paragraph.

Finally, we consider $F_5$.  By Proposition~\ref{Prop:xIs1}, we have $\gon(R(F_5) \leq 5$.  For the reverse inequality, note that $\vert L(2,2) \vert = \vert U(2,2) \vert = 6$.  Now, let $D$ be a divisor of degree 4.  We again assume that $D$ has the minimal deficit in the poorest $L$-column among all divisors in $\vert D \vert$.  Among divisors minimizing this deficit, we may further assume that $D$ has the maximum number of chips in the top row.  If one of the $L$-columns and one of the $U$-rows is impoverished, then the proof of Theorem~\ref{Thm:RedGreenRegion} shows that $D$ does not have positive rank.  If neither $L$-column is impoverished, then the first column has exactly 3 chips and the second column has exactly 1, and the rest of the proof is exactly as it was for $F_1$.  Finally, if neither $U$-row is impoverished, then the first row has exactly 3 chips and the second row has exactly 1.  Starting a fire at $(1,4)$ then results in either burning the whole graph, or producing an equivalent divisor with a smaller deficit in the poorest $L$-column, contradicting our choice of $D$.
\end{proof}

\end{document}